\title{Solution of Mumford's second problem}
\author{Julia Bernatska, Yaacov Kopeliovich}
\date{\today}
\keywords{Theta derivatives, theta constants, rational characteristics, residue theorem}
\address{University of Connecticut, Department of Mathematics}
\subjclass{14H42}
\email{julia.bernatska@uconn.edu, yaacov.kopeliovich@uconn.edu}
\newtheorem{thm}{Theorem}
\newtheorem{lem}{Lemma}
\newtheorem{prop}{Proposition}
\theoremstyle{definition}
\newtheorem{defn}{Definition}
\newtheorem*{Probl}{Problem} 
\newtheorem{example}{Example}
\newtheorem{rmk}{Remark}
\newcommand{\res}{\operatorname{res}}
\renewcommand{\mod}{\operatorname{mod}}
\newcommand{\lcm}{\operatorname{lcm}}
\newcommand{\rme}{\textrm{e}}
\newcommand{\m}{\phantom{-}}
\begin{document}
\begin{abstract}
A complete solution of Mumford's second problem about representation of theta derivatives with
rational characteristics in terms of theta constants with rational characteristics is found.
An explicit formula for computing such an expression for theta derivative with an arbitrary rational characteristic
 is derived, and illustrated with examples.
Expressions for theta derivatives appear to be homogeneous of degree $3$ with respect to theta constants.
\end{abstract}

\maketitle

\section{Introduction}
One of the most famous results in the theory of theta functions is Jacobi's theta derivative identity: 
\begin{equation}\label{JacobiIdentity}
\theta'\big[{}^{1/2}_{1/2}\big](0,\tau) = -\pi
\theta\big[{}^0_0\big](0,\tau)
\theta\big[{}^{1/2}_{\ 0}\big](0,\tau)
\theta\big[{}^{\ 0}_{1/2}\big](0,\tau),
\end{equation}
where the notation $\theta'\big[{}^{\varepsilon'}_{\varepsilon}\big](0,\tau)
 = \frac{\partial}{\partial z}\theta\big[{}^{\varepsilon'}_{\varepsilon}\big](z,\tau) \big|_{z=0}$  is used.
This identity expresses the theta derivative with odd characteristic  through 
a product of theta constants with even characteristics. 
Apart from the elegance of this formula this result has applications in number theory,
for example implies that $\theta'\big[{}^{1/2}_{1/2}\big](0,\tau)$ 
is a cusp form of weight~$3/2$ with respect to the full modular group 
$\mathrm{SL}(2,\mathbb{Z}).$
Following  Jacobi's theta derivative identity Mumford posed the problem 
about expressing a theta derivative with rational characteristic
as a cubic polynomial of theta constants with rational characteristics, see \cite[p.117 Question (II)]{Mu}.
Here we formulate a wider 
\begin{Probl}
Express theta derivatives $\theta' \big[{}^{\varepsilon'}_{\varepsilon}\big](0,\tau)$ 
with rational characteristics through theta constants $\theta \big[{}^{\alpha'}_{\alpha}\big](0,\tau)$ 
with rational characteristics.
\end{Probl} 
An expression of the type we are interested in was obtained in \cite[Theorem 5]{FK} 
\begin{multline*}
6 \theta' \big[{}^{1/2}_{1/6} \big](0,\tau) 
\theta\big[{}^{1/6}_{1/6} \big](0,\tau) \theta\big[{}^{1/6}_{1/2} \big](0,\tau) \theta\big[{}^{1/6}_{5/6} \big](0,\tau) \\
= \theta' \big[{}^{1/2}_{1/2} \big](0,\tau) \Big( \theta^3 \big[{}^{1/6}_{1/2} \big](0,\tau) 
+ \rme^{-\imath \pi/3} \theta^3 \big[{}^{1/6}_{5/6} \big](0,\tau)
+ \rme^{\imath \pi/3} \theta^3 \big[{}^{1/6}_{1/6} \big](0,\tau) \Big).
\end{multline*}
Then  \cite{M1}, \cite{M2}, \cite{M3}, and \cite{Z} made progress on this problem. 
In \cite{M1} and then \cite{Z} the Jacobi triple product identity is used, and the derived relations
combine theta functions of the first and higher levels\footnote{We call $\theta(z, n\tau)$ a theta function of level $n$}.
In \cite{M1} only theta derivatives with characteristics $\big[{}^{1/2}_{1/4} \big]$,
$\big[{}^{1/2}_{1/8} \big]$, $\big[{}^{1/2}_{3/8} \big]$ are considered. In \cite{Z} representations
for all theta derivatives whose characteristics have denominators equal to $4$, $6$, an $8$ are presented,
moreover theta derivatives and theta constants are expressed in terms of Dedekind eta function.
Another technique, used in \cite{M2, M3}, was firstly proposed in papers \cite{FKop1,FKop2}.
It is based on the idea of constructing a special elliptic function, application of residue theorem to which leads to 
a theta constant identity. In \cite{M2,M3} by means of this technique 
some identities for theta derivatives were obtained. 
Different elliptic functions were employed to produce identities in which rational characteristics
have denominators $3$, $4$, $6$, $8$, or $10$. 

In the present paper a specially designed elliptic function leads to an identity which is 
applicable to theta derivatives with arbitrary rational or real characteristics. We call this identity fundamental,
and show how to solve it and obtain an expression for a theta derivative with an arbitrary rational characteristic. 
It is worth to note that the identities and expressions obtained in this paper are the simplest among the all known for now.

Therefore, the Problem is solved completely. Taking into consideration that expressions for theta derivatives are homogeneous
of degree $3$ with respect to theta constants, we claim that Mumford's second problem is solved as well.
As far as the authors know this level of generality for elliptic theta derivative identities
has not appeared before.

The paper is structured as follows. In Section 2  the definition of theta function and its fundamental properties
are recalled. Section 3 is devoted to the proof of the fundamental theta derivative identity, which
 connects two theta derivatives with rational characteristics $\big[{}^{\varepsilon'}_{\varepsilon}\big]$
and $\big[{}^{3\varepsilon'}_{3\varepsilon}\big]$. 
In Section 4 the identity is solved for theta derivatives, so an expression for theta derivative
with an arbitrary rational characteristic can be found. 
Numerous examples illustrate the proposed formulas.

\section{Preliminaries}
Recall definition of theta function and its fundamental properties.
\begin{defn}
For $z\in \mathbb{C}$ and $\tau$ from Siegel upper half-space we define 
\begin{equation} 
\theta(z,\tau)=\sum_{n\in \mathbb{Z}} \exp \big(\pi \imath n^2\tau + 2 \pi \imath n z\big)
\end{equation}
\end{defn}
\begin{defn}
For $\big[{}^{\varepsilon'}_\varepsilon \big]\in \mathbb{R}^2$ we define 
\begin{multline}
\theta\big[{}^{\varepsilon'}_{\varepsilon}\big](z,\tau) = \sum_{n\in \mathbb{Z}}
\exp\big(\pi\imath (n+\varepsilon')^2 \tau +  2 \pi \imath (n+\varepsilon')(z+\varepsilon) \big) \\
= \mathrm{e}^{\pi \imath {\varepsilon'}^2\tau + 2 \pi \imath \varepsilon' (z+\varepsilon)}
\theta(z + \tau \varepsilon'+\varepsilon,\tau) 
\end{multline}
\end{defn}
In what follows the transformation law under an element $n+m\tau$ of the period lattice 
is essentially used.
\begin{subequations}\label{ThetaArgRels}
\begin{prop}
With $m,n\in \mathbb{Z}$ the following hold:
\begin{equation}\label{Rel1}
\theta\big[{}^{\varepsilon'}_{\varepsilon}\big](z+\tau m + n,\tau)=
\mathrm{e}^{- \pi\imath m^2 \tau - 2\pi \imath ( m (z + \varepsilon) - n  \varepsilon' )}
\theta\big[{}^{\varepsilon'}_{\varepsilon}\big](z,\tau),
\end{equation}
\begin{equation}\label{Rel2}
\theta\big[{}^{-\varepsilon'}_{-\varepsilon}\big](-z,\tau)
=\theta\big[{}^{\varepsilon'}_{\varepsilon}\big](z,\tau),
\end{equation}
\begin{equation}\label{Rel3}
\theta\big[{}^{\varepsilon'+n'}_{\; \varepsilon+n}\big](z,\tau)
= \mathrm{e}^{2 \pi\imath n \varepsilon'} \theta\big[{}^{\varepsilon'}_{\varepsilon}\big](z,\tau),
\end{equation}
\begin{equation}\label{Rel4}
\theta\big[{}^{\varepsilon'}_{\varepsilon}\big](z+ \tau s' + s,\tau)
= \mathrm{e}^{-\pi\imath \tau {s'}^2 - 2\pi\imath s'(z+s+\varepsilon)}
\theta\big[{}^{\varepsilon'+s'}_{\; \varepsilon+s}\big](z,\tau).
\end{equation}
\end{prop}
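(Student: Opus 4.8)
The plan is to derive all four relations directly from the series definition of $\theta\big[{}^{\varepsilon'}_{\varepsilon}\big](z,\tau)$, using that for $\tau$ in the Siegel upper half-space this series converges absolutely and uniformly on compact sets, so that re-indexing the sum and pulling summation-index--independent factors out of it are legitimate. I would prove \eqref{Rel3} and \eqref{Rel2} first, since they require only a relabelling of the summation variable, then establish \eqref{Rel4} by completing the square in the exponent, and finally obtain \eqref{Rel1} as the specialization of \eqref{Rel4} and \eqref{Rel3} to integer shift parameters.

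For \eqref{Rel3}, write $\theta\big[{}^{\varepsilon'+n'}_{\;\varepsilon+n}\big](z,\tau)$ as a sum over $m\in\mathbb{Z}$ and substitute $k=m+n'$, which is a bijection of $\mathbb{Z}$. The quadratic term becomes $\pi\imath(k+\varepsilon')^2\tau$ and the linear term $2\pi\imath(k+\varepsilon')(z+\varepsilon+n)$; expanding the latter as $2\pi\imath(k+\varepsilon')(z+\varepsilon)+2\pi\imath kn+2\pi\imath\varepsilon' n$ and using $\mathrm{e}^{2\pi\imath kn}=1$ leaves exactly $\mathrm{e}^{2\pi\imath n\varepsilon'}\theta\big[{}^{\varepsilon'}_{\varepsilon}\big](z,\tau)$. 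For \eqref{Rel2}, replace the summation index $m$ by $-k$ in the series for $\theta\big[{}^{-\varepsilon'}_{-\varepsilon}\big](-z,\tau)$; since $(-k-\varepsilon')^2=(k+\varepsilon')^2$ and $(-k-\varepsilon')(-z-\varepsilon)=(k+\varepsilon')(z+\varepsilon)$, the series is returned term by term to that of $\theta\big[{}^{\varepsilon'}_{\varepsilon}\big](z,\tau)$.

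The main computational step is \eqref{Rel4}. Substituting $z+\tau s'+s$ into the series, the exponent of the $m$-th term reads $\pi\imath(m+\varepsilon')^2\tau+2\pi\imath(m+\varepsilon')(z+s+\varepsilon)+2\pi\imath(m+\varepsilon')\tau s'$. Completing the square via $(m+\varepsilon'+s')^2\tau=(m+\varepsilon')^2\tau+2(m+\varepsilon')s'\tau+{s'}^2\tau$ rewrites this exponent as $\pi\imath(m+\varepsilon'+s')^2\tau+2\pi\imath(m+\varepsilon'+s')(z+s+\varepsilon)-\pi\imath{s'}^2\tau-2\pi\imath s'(z+s+\varepsilon)$. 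Pulling the two $m$-independent summands out of the sum identifies the remaining series with $\theta\big[{}^{\varepsilon'+s'}_{\;\varepsilon+s}\big](z,\tau)$, which is precisely \eqref{Rel4}. Finally, \eqref{Rel1} follows by taking $s'=m\in\mathbb{Z}$ and $s=n\in\mathbb{Z}$ in \eqref{Rel4}, obtaining the factor $\mathrm{e}^{-\pi\imath m^2\tau-2\pi\imath m(z+n+\varepsilon)}$ times $\theta\big[{}^{\varepsilon'+m}_{\;\varepsilon+n}\big](z,\tau)$, then applying \eqref{Rel3} to replace the latter by $\mathrm{e}^{2\pi\imath n\varepsilon'}\theta\big[{}^{\varepsilon'}_{\varepsilon}\big](z,\tau)$ and absorbing $\mathrm{e}^{-2\pi\imath mn}=1$ into the $z+\varepsilon$ term.

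I do not anticipate a genuine obstacle here. The only points requiring care are the justification that these term-by-term manipulations are permitted — which follows from absolute convergence of the defining series for $\tau$ in the Siegel upper half-space — and the sign bookkeeping in the completion of the square for \eqref{Rel4}, which is where a slip would most naturally occur.
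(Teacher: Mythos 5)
Your proof is correct. The paper in fact states this proposition without proof, treating the four relations as standard recalled facts; your derivation directly from the defining series — reindexing the sum for the parity and integer-shift relations, completing the square in the exponent for the shift by $\tau s' + s$, and then obtaining the lattice transformation law as the integer specialization of that shift formula combined with the characteristic-shift relation — is exactly the standard argument these identities rest on, and your phase bookkeeping (the factors $\mathrm{e}^{2\pi\imath n\varepsilon'}$ and $\mathrm{e}^{-\pi\imath\tau {s'}^2-2\pi\imath s'(z+s+\varepsilon)}$, and the discarded unimodular factors $\mathrm{e}^{2\pi\imath kn}$, $\mathrm{e}^{-2\pi\imath mn}$) is accurate, so nothing is missing.
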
 
Combining \eqref{Rel2} and \eqref{Rel3} find
\begin{equation}\label{Rel5}
\theta\big[{}^{-\varepsilon'+n'}_{\; -\varepsilon+n}\big](z,\tau)
= \mathrm{e}^{- 2 \pi\imath n \varepsilon'} \theta\big[{}^{\varepsilon'}_{\varepsilon}\big](-z,\tau),
\end{equation}
\end{subequations}
which implies the following relations for theta constants
and theta derivatives:
\begin{subequations}\label{ThCRels}
\begin{equation}\label{TCRel}
\theta\big[{}^{\pm\varepsilon'+n'}_{\; \pm\varepsilon+n}\big](0,\tau)
= \mathrm{e}^{\pm 2 \pi\imath n \varepsilon'} \theta\big[{}^{\varepsilon'}_{\varepsilon}\big](0,\tau),
\end{equation}
\begin{equation}\label{TDRel}
\theta'\big[{}^{\pm\varepsilon'+n'}_{\; \pm\varepsilon+n}\big](0,\tau)
= \pm \mathrm{e}^{\pm 2 \pi\imath n \varepsilon'} \theta'\big[{}^{\varepsilon'}_{\varepsilon}\big](0,\tau),
\end{equation}
\end{subequations}

Taking derivative of \eqref{Rel4} with respect to $z$ obtain
\begin{subequations}\label{ThetaDerRels}
\begin{multline}
\theta'\big[{}^{\varepsilon'}_{\varepsilon}\big](z+ \tau s' + s,\tau)
= \mathrm{e}^{-\pi\imath \tau {s'}^2 - 2\pi\imath s'(z+s+\varepsilon)} \Big(
\theta'\big[{}^{\varepsilon'+s'}_{\; \varepsilon+s}\big](z,\tau) \\
- 2\pi\imath s' \theta\big[{}^{\varepsilon'+s'}_{\; \varepsilon+s}\big](z,\tau)\Big),
\end{multline}
\begin{multline}
\theta''\big[{}^{\varepsilon'}_{\varepsilon}\big](z+ \tau s' + s,\tau) 
= \mathrm{e}^{-\pi\imath \tau {s'}^2 - 2\pi\imath s'(z+s+\varepsilon)} \Big(
\theta''\big[{}^{\varepsilon'+s'}_{\; \varepsilon+s}\big](z,\tau) \\
- 4\pi\imath s' \theta'\big[{}^{\varepsilon'+s'}_{\; \varepsilon+s}\big](z,\tau) 
- 4\pi^2 s'{}^2 \theta\big[{}^{\varepsilon'+s'}_{\; \varepsilon+s}\big](z,\tau)\Big).
\end{multline}
\end{subequations}

\begin{prop}\label{P:Roots}
$\theta\big[{}^{\varepsilon'}_{\varepsilon}\big](z,\tau)$ has a unique zero 
at $\tau(\tfrac{1}{2} -\varepsilon') + \tfrac{1}{2} - \varepsilon$
in the fundamental parallelogram with sides $1$ and $\tau$ 
corresponding to an elliptic curve. 
 \end{prop}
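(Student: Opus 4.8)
The plan is to prove Proposition~\ref{P:Roots} by reducing the zero-counting for $\theta\big[{}^{\varepsilon'}_{\varepsilon}\big](z,\tau)$ to that of the basic theta function $\theta(z,\tau)$, for which the classical argument applies. First I would invoke the second equality in the definition of $\theta\big[{}^{\varepsilon'}_{\varepsilon}\big](z,\tau)$, namely
\begin{equation*}
\theta\big[{}^{\varepsilon'}_{\varepsilon}\big](z,\tau) = \mathrm{e}^{\pi\imath{\varepsilon'}^2\tau + 2\pi\imath\varepsilon'(z+\varepsilon)}\,\theta(z+\tau\varepsilon'+\varepsilon,\tau),
\end{equation*}
and observe that the prefactor is an everywhere nonzero exponential. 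Hence the zeros of $\theta\big[{}^{\varepsilon'}_{\varepsilon}\big](z,\tau)$ are exactly the points $z$ for which $w := z + \tau\varepsilon' + \varepsilon$ is a zero of $\theta(w,\tau)$, and this correspondence is a translation of the $z$-plane, so it carries a fundamental parallelogram to a translated fundamental parallelogram and preserves the count of zeros inside.

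Next I would establish that $\theta(w,\tau)$ has exactly one zero in any fundamental parallelogram, located at $w = \tfrac{1}{2}\tau + \tfrac{1}{2}$. The count comes from the argument principle: using the quasi-periodicity relation \eqref{Rel1} with $\varepsilon'=\varepsilon=0$, the logarithmic derivative $\theta'(w,\tau)/\theta(w,\tau)$ acquires an additive constant under $w\mapsto w+1$ and changes by $-2\pi\imath m$ applied with $m=1$ under $w\mapsto w+\tau$; integrating $\frac{1}{2\pi\imath}\frac{\theta'}{\theta}$ around the boundary of the parallelogram, the contributions of opposite sides combine to give exactly $1$. To pin down the location, I would check directly from the series that $\theta\big(\tfrac12\tau+\tfrac12,\tau\big)=0$ by pairing the summation index $n$ with $-1-n$: the terms cancel term-by-term. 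Since there is only one zero and it is simple, this is the zero.

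Finally I would transport this back: the unique zero of $\theta\big[{}^{\varepsilon'}_{\varepsilon}\big](z,\tau)$ occurs where $z + \tau\varepsilon' + \varepsilon = \tfrac12\tau + \tfrac12 \pmod{\Integer + \tau\Integer}$, i.e.\ at $z = \tau(\tfrac12 - \varepsilon') + \tfrac12 - \varepsilon$, and by the translation remark there is exactly one such point in the fundamental parallelogram with sides $1$ and $\tau$. I do not expect any serious obstacle here; the only mild subtlety is the usual one of making sure the chosen fundamental parallelogram has no zero on its boundary (one translates it slightly if necessary, which does not affect the count by periodicity), and keeping the bookkeeping of the quasi-periods straight so that the contour integral yields $1$ rather than some other integer. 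The computation that the prefactor is nonvanishing and that the cancellation $n\leftrightarrow -1-n$ kills $\theta\big(\tfrac12\tau+\tfrac12,\tau\big)$ is routine and I would not spell it out in full.
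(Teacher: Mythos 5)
Your proof is correct, but it follows a genuinely different route from the paper's. The paper first shows, via \eqref{Rel2} and \eqref{Rel3} with $n=n'=1$, that $\theta\big[{}^{1/2}_{1/2}\big](z,\tau)$ is odd, hence vanishes at $z=0$, and then applies \eqref{Rel4} with $s=\tfrac12-\varepsilon$, $s'=\tfrac12-\varepsilon'$ to transport that zero to the point $\tau(\tfrac12-\varepsilon')+\tfrac12-\varepsilon$ of $\theta\big[{}^{\varepsilon'}_{\varepsilon}\big]$; the uniqueness of the zero in the fundamental parallelogram is not argued there at all, but is implicitly taken as the classical one-zero count. You instead strip off the nonvanishing exponential prefactor in the definition to reduce everything to $\theta(w,\tau)$, locate its zero at $w=\tfrac12\tau+\tfrac12$ by the pairing $n\leftrightarrow -1-n$ in the series, and prove the count of exactly one zero by integrating $\tfrac{1}{2\pi\imath}\,\theta'/\theta$ around the parallelogram using the quasi-periodicity \eqref{Rel1} (where, to be precise, under $w\mapsto w+1$ the logarithmic derivative is unchanged, so that pair of sides cancels, while the $w\mapsto w+\tau$ pair contributes $1$). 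The payoff of your version is that it actually establishes the word \emph{unique} in the statement, making the proposition self-contained, at the cost of a slightly longer argument; the paper's version is shorter, stays entirely within the characteristic-shift relations it has already listed, and produces the zero location in the exact form needed later in the residue computation, but proves only existence of the zero at the stated point.
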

\begin{proof}
Indeed, from \eqref{Rel2} and \eqref{Rel3} with $\varepsilon=\frac{1}{2}$, $\varepsilon' =\frac{1}{2}$, 
and $n=1$, $n'=1$ find
\begin{gather*}
\theta\big[{}^{-1/2}_{-1/2}\big](-z,\tau) = \theta\big[{}^{1/2}_{1/2}\big](z,\tau),\\
\theta\big[{}^{-1/2}_{-1/2}\big](-z,\tau) = \theta\big[{}^{1/2-1}_{1/2-1}\big](-z,\tau)
= \mathrm{e}^{-\pi \imath} \theta\big[{}^{1/2}_{1/2}\big](-z,\tau).
\end{gather*}
Since $\theta\big[{}^{1/2}_{1/2}\big](z,\tau)$ is an odd function, 
thus, $\theta\big[{}^{1/2}_{1/2}\big](0,\tau) = 0$, recall that 
$\theta\big[{}^{1/2}_{1/2}\big](z,\tau) = \vartheta_3(z,\tau)$.
From \eqref{Rel4} with 
and $s=\frac{1}{2} - \varepsilon $, $s'=\frac{1}{2} - \varepsilon'$ obtain
\begin{equation*}
\theta\big[{}^{1/2}_{1/2}\big](z,\tau) = 
\mathrm{e}^{\pi\imath \tau (\varepsilon'-1/2)^2 
- 2\pi\imath (\varepsilon'-1/2)(z+1/2)}
\theta\big[{}^{\varepsilon'}_{\varepsilon}\big]\big(z+ \tau(\tfrac{1}{2} -\varepsilon')
 + \tfrac{1}{2} - \varepsilon,\tau\big)
\end{equation*}
Thus, the function $\theta\big[{}^{\varepsilon'}_{\varepsilon}\big]\big(z+ \tau(\tfrac{1}{2} -\varepsilon')
 + \tfrac{1}{2} - \varepsilon,\tau\big)$ has zero at $\tau(\tfrac{1}{2} -\varepsilon')
 + \tfrac{1}{2} - \varepsilon$.
\end{proof}

\section{The fundamental theta derivative identity} 
In this section we formulate a fundamental identity connecting theta derivatives and theta constants  
with rational characteristics. 
\begin{thm}\label{T:ThetaDerRel}
Let $\big[{}^{\varepsilon'}_{\varepsilon}\big] \in \mathbb{R}^{2}$ be any characteristic. Then
\begin{multline}\label{MainExpr}
 \theta^2\big[{}^{\varepsilon'}_{\varepsilon}\big](0,\tau) 
 \Big(3\theta\big[{}^{3\varepsilon' }_{3\varepsilon}\big](0,\tau)
 \theta'\big[{}^{\varepsilon'}_{\varepsilon}\big](0,\tau)
-  \theta'\big[{}^{3\varepsilon' }_{3\varepsilon}\big](0,\tau) 
\theta\big[{}^{\varepsilon'}_{\varepsilon}\big](0,\tau)   \Big)\\
= \mathrm{e}^{6\pi \imath \varepsilon'} \theta'\big[{}^{1/2}_{1/2}\big](0,\tau)
\theta^3\big[{}^{\frac{1}{2} - 2\varepsilon'}_{\frac{1}{2} - 2\varepsilon}\big](0,\tau).
\end{multline}
\end{thm}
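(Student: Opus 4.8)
The plan is to construct an explicit elliptic function whose poles and zeros are governed by the theta functions appearing in \eqref{MainExpr}, and then apply the residue theorem. Guided by Proposition~\ref{P:Roots}, I would form a candidate of the shape
\[
f(z) = \frac{\theta\big[{}^{\varepsilon'}_{\varepsilon}\big](z+a_1,\tau)\,\theta\big[{}^{\varepsilon'}_{\varepsilon}\big](z+a_2,\tau)\,\theta\big[{}^{\varepsilon'}_{\varepsilon}\big](z+a_3,\tau)}{\theta^{3}\big[{}^{3\varepsilon'}_{3\varepsilon}\big](z,\tau)},
\]
choosing the shifts $a_1,a_2,a_3$ so that $f$ has trivial multipliers under $z\mapsto z+1$ and $z\mapsto z+\tau$; the transformation law \eqref{Rel1} forces $a_1+a_2+a_3$ to be fixed modulo the lattice, and the three $a_i$ to differ by third-periods, i.e. $a_j = a + j\tau/3$ (or $a + j/3$), which is exactly the source of the tripling $3\varepsilon'\mapsto\varepsilon'$ in the characteristics. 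With the $a_i$ pinned down, $f$ is a genuine elliptic function of the right order.

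Next I would locate the zero of $\theta\big[{}^{3\varepsilon'}_{3\varepsilon}\big](z,\tau)$ via Proposition~\ref{P:Roots}, namely at $z_0 = \tau(\tfrac12 - 3\varepsilon') + \tfrac12 - 3\varepsilon$, so that $f$ has a single pole of order three there inside the fundamental parallelogram. The residue theorem then gives $\res_{z=z_0} f(z) = 0$. Computing this residue requires expanding $f$ to second order in $z - z_0$: the numerator is expanded using Taylor's formula (its value, first, and second derivatives at $z_0$, expressed through $\theta$, $\theta'$, $\theta''$ with shifted arguments), and the denominator is expanded using that $\theta\big[{}^{3\varepsilon'}_{3\varepsilon}\big]$ vanishes simply at $z_0$, so $\theta\big[{}^{3\varepsilon'}_{3\varepsilon}\big](z,\tau) = (z-z_0)\,\theta'[\cdots](z_0,\tau)\big(1 + c_1(z-z_0) + \cdots\big)$. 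The residue is then the coefficient of $(z-z_0)^{-1}$ in the product, a rational combination of these Taylor data.

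The third step is to translate all the shifted-argument theta values back to theta \emph{constants} and \emph{theta derivatives at $0$}. Here the relations \eqref{Rel4}, \eqref{ThetaDerRels}, together with \eqref{TCRel}–\eqref{TDRel} and the identification of the zero locus with the half-period $\big[{}^{1/2}_{1/2}\big]$ as in the proof of Proposition~\ref{P:Roots}, do all the work: each $\theta\big[{}^{\varepsilon'}_{\varepsilon}\big](z_0 + a_j,\tau)$ becomes an exponential prefactor times a theta constant with a \emph{doubled-and-shifted} characteristic $\big[{}^{1/2 - 2\varepsilon'}_{1/2 - 2\varepsilon}\big]$ (three of these produce the cube on the right-hand side), the derivative $\theta'\big[{}^{3\varepsilon'}_{3\varepsilon}\big](z_0,\tau)$ becomes $\pm e^{\cdots}\theta'\big[{}^{1/2}_{1/2}\big](0,\tau)$ after a lattice translation, and the correction coefficient $c_1$ contributes the terms with $\theta'\big[{}^{\varepsilon'}_{\varepsilon}\big](0,\tau)$ and $\theta'\big[{}^{3\varepsilon'}_{3\varepsilon}\big](0,\tau)$. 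Collecting exponential prefactors should yield precisely the factor $\mathrm{e}^{6\pi\imath\varepsilon'}$.

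I expect the main obstacle to be bookkeeping rather than conceptual: getting the shifts $a_j$ exactly right (including which of $\tau/3$ versus $1/3$ shifts to use, and their sum) so that $f$ is elliptic and its numerator's argument-shifts collapse to the single characteristic $\big[{}^{1/2-2\varepsilon'}_{1/2-2\varepsilon}\big]$, and then carrying the numerous exponential factors from \eqref{Rel1}, \eqref{Rel4}, \eqref{ThetaDerRels} and \eqref{TCRel}–\eqref{TDRel} through the second-order residue computation without sign or phase errors, so that everything consolidates into the compact form \eqref{MainExpr}. A secondary subtlety is justifying that $z_0$ is the \emph{only} pole in a suitable period parallelogram (so that the sum of residues is just this one residue), which follows from Proposition~\ref{P:Roots} applied to $\theta\big[{}^{3\varepsilon'}_{3\varepsilon}\big]$, with a shift of the contour if $z_0$ or one of the numerator zeros lies on the boundary.
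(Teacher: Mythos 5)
Your construction breaks at the first step: the proposed quotient cannot be made elliptic by any choice of the shifts $a_1,a_2,a_3$. Under $z\mapsto z+1$ each numerator factor $\theta\big[{}^{\varepsilon'}_{\varepsilon}\big](z+a_j,\tau)$ picks up the multiplier $\mathrm{e}^{2\pi\imath\varepsilon'}$ \emph{independently} of $a_j$ (this is \eqref{Rel1} with $m=0$, $n=1$), while the denominator $\theta^3\big[{}^{3\varepsilon'}_{3\varepsilon}\big](z,\tau)$ picks up $\mathrm{e}^{18\pi\imath\varepsilon'}$, so your $f$ satisfies $f(z+1)=\mathrm{e}^{-12\pi\imath\varepsilon'}f(z)$ no matter how the shifts are chosen; this is trivial only when $6\varepsilon'\in\mathbb{Z}$, whereas the theorem is claimed for arbitrary real characteristics. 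Argument shifts can only tune the $z\mapsto z+\tau$ multiplier; to kill the $z\mapsto z+1$ multiplier you must balance the characteristics themselves, i.e.\ make the sums of the top (and of the bottom) entries of numerator and denominator agree modulo integers. That is precisely what the paper's function does: $f(z)=\theta\big[{}^{1/2}_{1/2}\big](z,\tau)^3\big/\big(\theta\big[{}^{\varepsilon'}_{\varepsilon}\big](z,\tau)^2\,\theta\big[{}^{\delta'}_{\delta}\big](z,\tau)\big)$ with $\delta=\tfrac12-2\varepsilon$, $\delta'=\tfrac12-2\varepsilon'$, where both sums differ by the integer $1$.

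Even granting ellipticity, your single pole of order three at the zero $z_0$ of $\theta\big[{}^{3\varepsilon'}_{3\varepsilon}\big]$ would not produce \eqref{MainExpr}. The second-order Taylor data of your numerator at $z_0$ give $\theta$, $\theta'$, $\theta''$ with characteristic $\big[{}^{1/2-2\varepsilon'}_{1/2-2\varepsilon}\big]$ (further shifted by $\pm\tfrac13$ or $\pm\tfrac{\tau}{3}$ if the $a_j$ are genuine third-periods, which would already destroy the desired cube — the ``collapse'' you describe happens only for $a_j\equiv 0$ in the lattice, making the shifts vacuous), and the denominator's expansion gives $\theta'\big[{}^{1/2}_{1/2}\big]$, $\theta''\big[{}^{1/2}_{1/2}\big]=0$, $\theta'''\big[{}^{1/2}_{1/2}\big]$. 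Nothing in this data produces the constants $\theta'\big[{}^{\varepsilon'}_{\varepsilon}\big](0,\tau)$ and $\theta'\big[{}^{3\varepsilon'}_{3\varepsilon}\big](0,\tau)$ that form the left-hand side of \eqref{MainExpr}. In the paper these arise because the poles are placed at the zeros of $\theta\big[{}^{\varepsilon'}_{\varepsilon}\big]$ (double pole) and of $\theta\big[{}^{\delta'}_{\delta}\big]$ (simple pole), so the derivatives enter through $g'(a)$ and $g''(a)$ in the residue formulas \eqref{ResQuotient}, while the tripling of the characteristic appears arithmetically from $\tfrac12-\varepsilon+\delta=1-3\varepsilon$ (and likewise for $\varepsilon'$) at the evaluation points via \eqref{Rel4} — not from third-period shifts of the argument. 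To repair your argument you would have to redesign the function along these lines (balanced characteristics, two poles located at zeros of the thetas whose derivatives you want), at which point you recover the paper's proof rather than an alternative to it.
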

\begin{proof} 
Let $\big[{}^{\varepsilon'}_{\varepsilon}\big]$, $\big[{}^{\delta'}_{\delta}\big] \in \mathbb{R}^{2}$ be characteristics such that $(2\varepsilon+\delta) =1/2$ and $(2\varepsilon'+\delta') = 1/2 $. 
Then introduce the following elliptic function
\begin{equation} 
f(z)=\frac{\theta\big[{}^{1/2}_{1/2}\big](z,\tau)^3}
{\theta\big[{}^{\varepsilon'}_{\varepsilon}\big](z,\tau)^2 \theta\big[{}^{\delta'}_{\delta}\big](z,\tau)}.
\end{equation} 
Hence, by the residue theorem we have
\begin{equation*}
\sum \res(f)=0,
\end{equation*}
where the sum is taken over all poles in the fundamental domain.
 
As shown in  Proposition~\ref{P:Roots}
function $f$ has a simple pole at $ \tau(\tfrac{1}{2} -\delta') + \tfrac{1}{2} - \delta$,
 and a pole of order $2$ at $ \tau(\tfrac{1}{2} -\varepsilon')
 + \tfrac{1}{2} - \varepsilon$.
It is well-known that the residue at $a$ of a function of the form $h(z)/g(z)$ such that $h(a)\neq 0$,
$g(a)=0$ and $g'(a)\neq 0$ is computed by the formula 
\begin{subequations}\label{ResQuotient}
\begin{equation}\label{Res1Pole}
\res_{a}\bigg(\frac{h(z)}{g(z)}\bigg) = \frac{h(z_0)}{g'(z_0)}.
\end{equation}
Find a similar expression for the residue at $a$ of a function
of the form $h(z)/g(z)^2$
such that $h(a)\neq 0$, $g(a)=0$ and $g'(a)\neq 0$. 
First, note that $g(z) = (z-a)g_1(z)$ since $a$ is a simple zero of $g$. Then by the formula for a residue
we find
\begin{multline}\label{Res2Pole}
\res_{a}\bigg(\frac{h(z)}{g(z)^2}\bigg)=\frac{d}{dz}\bigg(\frac{h(z)}{g_1(z)^2}\bigg)\bigg|_{z=a} 
= \bigg(\frac{h'(z)}{g_1(z)^2}-\frac{2 g'_1(z) h(z)}{g_1(z)^3}\bigg)\bigg|_{z=a} \\
=\frac{h'(a)}{g'(a)^2} - h(a)\frac{g''(a)}{g'(a)^3}.
\end{multline} 
Here the fact that $g_1(a)=g'(a)$ and $g'_1(a)= \frac{1}{2} g''(a)$ was taken into account.
\end{subequations}

Using \eqref{ResQuotient} compute residues of $f$ in the fundamental domain.
By \eqref{Res2Pole} residue at $\tau(\frac{1}{2} -\varepsilon') + \frac{1}{2} - \varepsilon$
is the following:
\begin{multline}\label{Res2PoleExpr}
\res_{\tau(\frac{1}{2} -\varepsilon') + \frac{1}{2} - \varepsilon} (f) \\
= \frac{1}{\theta'\big[{}^{\varepsilon'}_{\varepsilon}\big](z,\tau)^2}
\dfrac{\partial}{\partial z}\bigg( \dfrac{\theta\big[{}^{1/2}_{1/2}\big](z,\tau)^3}
{\theta\big[{}^{\delta'}_{\delta}\big](z,\tau)}\bigg)
- \dfrac{\theta\big[{}^{1/2}_{1/2}\big](z,\tau)^3 \theta''\big[{}^{\varepsilon'}_{\varepsilon}\big](z,\tau)}
{\theta\big[{}^{\delta'}_{\delta}\big](z,\tau) \theta'\big[{}^{\varepsilon'}_{\varepsilon}\big](z,\tau)^3}
\Bigg|_{z=\tau(\frac{1}{2} -\varepsilon') + \frac{1}{2} - \varepsilon} \\
= \frac{\mathrm{e}^{- 2\pi \imath (\frac{1}{2} -\varepsilon')(\frac{3}{2} - 2\varepsilon - \delta) - 6 \pi \imath \varepsilon'} 
\theta\big[{}^{\varepsilon'}_{\varepsilon}\big](0,\tau)^2}{
\theta\big[{}^{\frac{1}{2} - \varepsilon' + \delta'}_{\; \frac{1}{2} - \varepsilon + \delta}\big](0,\tau)^2
\theta'\big[{}^{1/2}_{1/2}\big](0,\tau)^3} 
\Bigg( \bigg(
3\theta\big[{}^{\frac{1}{2} - \varepsilon' + \delta'}_{\; \frac{1}{2} - \varepsilon + \delta}\big](0,\tau)
\Big(-\theta'\big[{}^{\varepsilon'}_{\varepsilon}\big](0,\tau)
-  2\pi \imath (\tfrac{1}{2} -\varepsilon') \theta\big[{}^{\varepsilon'}_{\varepsilon}\big](0,\tau)\Big) \\
- \Big(\theta'\big[{}^{\frac{1}{2} - \varepsilon' + \delta'}_{\; \frac{1}{2} - \varepsilon + \delta}\big](0,\tau) 
- 2\pi \imath (\tfrac{1}{2} -\varepsilon') 
\theta\big[{}^{\frac{1}{2} - \varepsilon' + \delta'}_{\; \frac{1}{2} - \varepsilon + \delta}\big](0,\tau) \Big)
\theta\big[{}^{\varepsilon'}_{\varepsilon}\big](0,\tau) \bigg) 
\theta'\big[{}^{1/2}_{1/2}\big](0,\tau) \\
+ 4 \pi \imath (\tfrac{1}{2} -\varepsilon') 
\theta\big[{}^{\frac{1}{2} - \varepsilon' + \delta'}_{\; \frac{1}{2} - \varepsilon + \delta}\big](0,\tau)
\theta\big[{}^{\varepsilon'}_{\varepsilon}\big](0,\tau) 
 \theta'\big[{}^{1/2}_{1/2}\big](0,\tau) \Bigg),
\end{multline} 
where \eqref{ThetaArgRels} and \eqref{ThetaDerRels} are applied.
Then compute residue at $\tau(\frac{1}{2} -\delta') + \frac{1}{2} - \delta$
\begin{multline}\label{Res1PoleExpr}
\res_{\tau(\frac{1}{2} -\delta') + \frac{1}{2} - \delta} (f)
= \frac{\theta\big[{}^{1/2}_{1/2}\big](z,\tau)^3}
{\theta\big[{}^{\varepsilon'}_{\varepsilon}\big](z,\tau)^2 \theta' \big[{}^{\delta'}_{\delta}\big](z,\tau)}
\bigg|_{z=\tau(\frac{1}{2} -\delta') + \frac{1}{2} - \delta} \\
= \mathrm{e}^{- 2\pi \imath (\frac{1}{2} -\delta')(\frac{3}{2} - 2 \varepsilon  -\delta) + 2\pi \imath(1 - 2\varepsilon' - \delta')} 
\frac{\theta\big[{}^{\delta'}_{\delta}\big](0,\tau)^3}
{\theta\big[{}^{\frac{1}{2} - \varepsilon' + \delta'}_{\; \frac{1}{2} - \varepsilon +\delta}\big](0,\tau)^2
\theta'\big[{}^{1/2}_{1/2}\big](0,\tau)}.
\end{multline}

The residue theorem gives
\begin{equation}
\res_{\tau(\frac{1}{2} -\varepsilon') + \frac{1}{2} - \varepsilon} (f) 
+ \res_{\tau(\frac{1}{2} -\delta') + \frac{1}{2} - \delta} (f) = 0
\end{equation}
or with \eqref{Res1PoleExpr} and \eqref{Res2PoleExpr}
taken into account
\begin{multline}\label{edExpr}
-\theta^2\big[{}^{\varepsilon'}_{\varepsilon}\big](0,\tau)
\Big(3\theta\big[{}^{\frac{1}{2} - \varepsilon' + \delta'}_{\; \frac{1}{2} - \varepsilon + \delta}\big](0,\tau)
 \theta'\big[{}^{\varepsilon'}_{\varepsilon}\big](0,\tau) 
+ \theta'\big[{}^{\frac{1}{2} - \varepsilon' + \delta'}_{\; \frac{1}{2} - \varepsilon + \delta}\big](0,\tau) 
\theta\big[{}^{\varepsilon'}_{\varepsilon}\big](0,\tau)  \Big) \\
+ \mathrm{e}^{- 2\pi \imath (\varepsilon' -\delta')(\frac{3}{2} - 2 \varepsilon  -\delta)
+ 2\pi \imath(1 +\varepsilon' - \delta') } 
\theta^3\big[{}^{\delta'}_{\delta}\big](0,\tau) \theta'\big[{}^{1/2}_{1/2}\big](0,\tau) = 0.
\end{multline}
Since $2\varepsilon + \delta = 1/2$ and $2\varepsilon' + \delta' = 1/2$, 
and $\theta\big[{}^{1 - 3\varepsilon' }_{1 - 3\varepsilon}\big](0,\tau)
= \mathrm{e}^{-6\pi \imath \varepsilon'} \theta\big[{}^{3\varepsilon' }_{3\varepsilon}\big](0,\tau)$ 
as follows from relations \eqref{Rel3},
we obtain the fundamental theta identity \eqref{MainExpr}.
\end{proof}

\begin{rmk}
Note that in Theorem~\ref{T:ThetaDerRel} 
characteristic $\big[{}^{\varepsilon'}_{\varepsilon}\big]$ contains real numbers,
and the fundamental identity \eqref{MainExpr} holds for an arbitrary real characteristic.
However, we are interested in rational characteristics seeing how to solve such identities for
theta derivatives, which is explained in the next section. 
\end{rmk}

\section {Theta derivatives with rational characteristics} 
The fundamental theta derivative identity \eqref{MainExpr}
allows to solve completely the Problem declared in Introduction. 
Below an explicit formula for calculating a theta derivative with an arbitrary rational characteristic is obtained,
and illustrated with  examples.

\subsection{Multiplication by $3$}
Let $\varepsilon$ be a rational number of the form $m/p$. Introduce 
the operator  $\mathfrak{T}$ of multiplication by $3$
such that $\mathfrak{T} m/p = (3 m \mod p)/p$,
and the $k$-th power is $\mathfrak{T}^k m/p = \big((3^k \cdot m) \mod p\big)/p$. 
Under the action of $\mathfrak{T}$ a set of proper rational numbers 
$\mathcal{P}(p) = \{m/p \mid 1 \leqslant m \leqslant p-1\}$
splits into orbits:
$$\mathcal{O}_\varepsilon =\{\mathfrak{T}^k \varepsilon \mid  0 \leqslant k < p-1\}.$$  

If $p$ is prime and non-divisible by $3$, by  Fermat's little theorem
$\mathfrak{T}^{p-1} m/p = m/p$. Thus, the action of $\mathfrak{T}$ is periodic  
with period equal to $p-1$ or a divisor of $p-1$. For example, when $p=2$, $5$, $7$, or $17$
there exists only one non-trivial orbit $\mathcal{O}_{1/p}$ of cardinality $|\mathcal{O}_{1/p}|=p-1$, namely:
$1$, $4$, $6$, or $16$. However, at $p=11$ the set of proper rational numbers $\mathcal{P}(11)$
splits into two orbits: $\mathcal{O}_{1/11}$ and $\mathcal{O}_{2/11}$ of cardinality~$5$.
At $p=13$ set $\mathcal{P}(13)$
splits into four orbits: $\mathcal{O}_{1/13}$, $\mathcal{O}_{2/13}$, 
$\mathcal{O}_{4/13}$, and $\mathcal{O}_{7/13}$ of cardinality~$3$.

Let $p$ be non-prime, non-divisible by $3$. By Euler's theorem
with relatively prime $p$ and $3$  one has
$3^{\varphi(p)}\equiv 1 \mod p$,
where $\varphi$ is Euler's totient function computed as follows.
Let $p= p_1^{k_1} p_2^{k_2} \cdots p_r^{k_r}$, then
\begin{gather*}
\varphi(p) = p_1^{k_1-1} (p_1-1) p_2^{k_2-1} (p_2-1) \cdots p_r^{k_r-1}(p_r-1).
\end{gather*}
Thus, the action of $\mathfrak{T}$ is periodic  
with period equal to $\varphi(p)$ or a divisor of $\varphi(p)$, at that the sum of cardinalities of all orbits
is $p-1$. For example, if $p=4$, then $\varphi(4)=2$, and $\mathcal{P}(4)$ splits into two orbits:
$\mathcal{O}_{1/4}$ of cardinality $2$ and $\mathcal{O}_{2/4}$ of cardinality $1$. If $p=8$, then 
$\varphi(8)=4$, and $\mathcal{P}(8)$ splits into four orbits:
$\mathcal{O}_{1/8}$, $\mathcal{O}_{2/8}$, $\mathcal{O}_{5/8}$ of cardinality $2$ 
and $\mathcal{O}_{4/8}$ of cardinality $1$.

\begin{lem}\label{L:PeriodT}
The operator of multiplication by $3$ acts periodically on a rational characteristic $[{}^{\; n/q}_{\,m/p}]$
if $p$ and $q$ are not divisible by $3$.
\end{lem}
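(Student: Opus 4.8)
The plan is to reduce the two-dimensional statement to the one-dimensional fact already established in Section~4 for the scalar operator $\mathfrak{T}$ acting on $\mathcal{P}(p)$. Recall that a rational characteristic $[{}^{\,n/q}_{\,m/p}]$ is really a pair of rational numbers, and the operator of multiplication by $3$ acts on it componentwise, sending $[{}^{\,n/q}_{\,m/p}]$ to $[{}^{\,3n/q}_{\,3m/p}]$, which by relation \eqref{Rel3} may always be normalized to $[{}^{(3n \bmod q)/q}_{(3m \bmod p)/p}]$ up to a root of unity. So the orbit of $[{}^{\,n/q}_{\,m/p}]$ under multiplication by $3$ is contained in the Cartesian product $\mathcal{O}_{m/p} \times \mathcal{O}_{n/q}$ of the two scalar orbits.

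\medskip

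First I would invoke the analysis preceding the lemma: since $p$ is not divisible by $3$, either $p$ is prime (and Fermat's little theorem gives $3^{p-1}\equiv 1 \bmod p$) or $p$ is composite and coprime to $3$ (and Euler's theorem gives $3^{\varphi(p)}\equiv 1 \bmod p$); in either case there is a positive integer $a$ with $3^a \equiv 1 \bmod p$, namely the multiplicative order of $3$ modulo $p$, so that $\mathfrak{T}^a(m/p) = m/p$ for every $m$. Likewise, since $q$ is not divisible by $3$, there is a positive integer $b$ with $3^b \equiv 1 \bmod q$, so that $\mathfrak{T}^b(n/q) = n/q$. Second, I would set $N = \lcm(a,b)$, or more crudely $N = ab$; then $3^N \equiv 1 \bmod p$ and $3^N \equiv 1 \bmod q$ simultaneously, whence
\begin{equation*}
\mathfrak{T}^N [{}^{\,n/q}_{\,m/p}]
= [{}^{(3^N n \bmod q)/q}_{(3^N m \bmod p)/p}]
= [{}^{\,n/q}_{\,m/p}].
\end{equation*}
This shows the action is periodic with period dividing $N$, which is exactly the assertion. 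Equivalently, $N = \lcm(\mathrm{ord}_p 3,\ \mathrm{ord}_q 3)$ is the exact period, and in any case it divides $\lcm(\varphi(p),\varphi(q))$.

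\medskip

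The only place that requires care — the "main obstacle," though it is quite mild — is the bookkeeping for the two normalizations of the denominators happening independently: one must be sure that reducing $3n \bmod q$ in the upper entry does not interfere with reducing $3m \bmod p$ in the lower entry, and that the accumulated root-of-unity factors from repeated applications of \eqref{Rel3} do not affect the \emph{characteristic} itself (they only rescale the theta function, not its characteristic, so periodicity of the characteristic is unaffected). Once this is observed, the proof is essentially a two-line application of the existence of the multiplicative order of $3$ modulo each of $p$ and $q$, combined via the least common multiple, exactly as in the scalar discussion above for $\mathcal{P}(p)$.
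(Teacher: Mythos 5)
Your argument is correct and is essentially the paper's own proof: periodicity on each entry separately (guaranteed by Fermat/Euler since $3$ is invertible modulo $p$ and modulo $q$), combined via the least common multiple to get a common period for the pair. One small caveat: your side remark that $\lcm(\mathrm{ord}_p 3,\,\mathrm{ord}_q 3)$ is the \emph{exact} period is not quite right when $m$ (or $n$) shares a factor with $p$ (or $q$) --- the true period is $\lcm(|\mathcal{O}_{m/p}|,|\mathcal{O}_{n/q}|)$, which may be a proper divisor --- but this does not affect the periodicity claim being proved.
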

\begin{proof}
Let $\mathcal{O}_{m/p}$ and $\mathcal{O}_{n/q}$ be orbits of action of the introduced above 
operator $\mathfrak{T}$ of multiplication by $3$. As shown above, the  action of $\mathfrak{T}$
on the both rational numbers is periodic with periods $|\mathcal{O}_{m/p}|$ and $|\mathcal{O}_{n/q}|$, respectively.
Then the action of $\mathfrak{T}$ on characteristic $[{}^{\; n/q}_{\,m/p}]$ has period
$\mathfrak{t} = \lcm(|\mathcal{O}_{m/p}|,|\mathcal{O}_{n/q}|)$.
\end{proof}
In what follows with a rational characteristic $[{}^{\; n/q}_{\,m/p}]$ we associate an ordered set 
$$\mathcal{S}\big(\big[{}^{\; n/q}_{\,m/p} \big] \big) 
= \{\mathfrak{T}^k \big[{}^{\; n/q}_{\,m/p} \big] \mid 0 \leqslant k \leqslant \mathfrak{t} -1\}$$
of characteristics generated by the action of multiplication by $3$, 
and denote by $\mathfrak{t}$ the period of this action on such a set.

Now consider the case of $p$ divisible by $3$.
Start with the simplest cases. When $p=3$ there are two proper rational numbers 
$\{1/3,\, 2/3 \} = \mathcal{P}(3)$. Dividing $p$ by~$3$ come to $\tilde{p} = p/3 =1$,
at that $\mathcal{P}(1)=\{0\}$. Evidently, $\mathfrak{T}^k 0 = 0$ for all $k\geqslant 0$,
that is $0$  serves as a stationary value.
Acting on elements of $\mathcal{P}(3)$ by $\mathfrak{T}$ one gets $\{1/3,\,0,\,0,\, \dots\}$
and $\{2/3,\,0,\,0,\, \dots\}$, where each sequence stabilizes at $0$. Then assign
$\mathcal{O}_{1/3} = \{1/3,\,0\}$ and $\mathcal{O}_{2/3} = \{2/3,\,0\}$.
Next, consider  $p=6$, and $\tilde{p} = p/3 =2$. Recall
that $\mathcal{P}(2) = \{1/2\}$, and $\mathfrak{T}^k 1/2 = 1/2$ for all $k\geqslant 0$.
Thus, $1/2$  also serves as a stationary value, so $\mathcal{O}_{3/6} = \{1/2\}$. Another subset
contained into $\mathcal{P}(6)$ is $\mathcal{P}(3)$ whose elements behave under 
the action of $\mathfrak{T}$ as explained above, thus $\mathcal{O}_{2/6} = \{1/3,\,0\}$ 
and $\mathcal{O}_{4/6} = \{2/3,\,0\}$. Finally, by direct computation find
$\mathcal{O}_{1/6} = \{1/6,\,1/2\}$ and $\mathcal{O}_{5/6} = \{5/6,\,1/2\}$, where $1/2$
serves a stationary value. Summarizing, there exist two stationary values: $0$ and $1/2$,
the former finalizes orbits $\mathcal{O}_{m/3^a}$ with integer $a\geqslant 1$,
and the latter orbits $\mathcal{O}_{m/ (2\cdot 3^a)}$ with integer $a\geqslant 0$. 
Now
let $\tilde{p}$  be non-divisible by $3$, obtained from $p$ after
dividing by the maximal power of $3$. When $\tilde{p} > 3$ 
under the action of $\mathfrak{T}$ set $\mathcal{P}(\tilde{p})$ splits into distinct orbits,
and the action is periodic on each of them as proven above. Since $\mathcal{P}(\tilde{p}) \subset \mathcal{P}(p)$
numbers $m/p \in \mathcal{P}(p)$ with $m$ divisible by $3$ are distributed between these periodic orbits. 
Numbers $m/p$ with $m$ divisible by the divisors of $p$ different from $3$
under repeated multiplication by~$3$ produce orbits which finalize at one 
of the mentioned stationary values.  
The action of multiplication by~$3$ on other numbers $m/p\in \mathcal{P}(p)\backslash \mathcal{P}(\tilde{p})$ 
such that $m$ and $p$ are relatively prime 
finalizes at one of the periodic orbit of $\mathcal{P}(\tilde{p})$. 
For example, if $p=15$ we have a periodic orbit of $\mathcal{P}(5)$, which is
$\mathcal{O}_{3/15} = \{1/5,\,3/5,\,4/5,\,2/5\}$, 
orbits $\mathcal{O}_{5/15} = \{1/3,\,0\}$, $\mathcal{O}_{10/15} = \{2/3,\,0\}$, 
finalizing at a stationary value, and eight orbits:  $\mathcal{O}_{1/15}$,
$\mathcal{O}_{2/15}$, $\mathcal{O}_{4/15}$, $\mathcal{O}_{7/15}$, $\mathcal{O}_{8/15}$,
$\mathcal{O}_{11/15}$, $\mathcal{O}_{13/15}$, $\mathcal{O}_{14/15}$
 finalizing at the periodic orbit $\mathcal{O}_{3/15}$,
 namely: $\mathcal{O}_{1/15}=\{1/15,\,1/5,\,3/5,\,4/5,\,2/5\}$,
$\mathcal{O}_{2/15}=\{2/15,\,2/5,\,1/5,\,3/5,\,4/5\}$ and so on.

\subsection{Expressions for theta derivatives}
Assume $\varepsilon = m/p$ and take into account the relation
\begin{align*}
&\theta \big[{}^{3^{k} \varepsilon'}_{3^{k} m/p}\big] = 
\exp \big(2 \pi \imath \lfloor 3^{k} m/p \rfloor \mathfrak{T}^{k} \varepsilon' \big)
\theta \big[{}^{\mathfrak{T}^{k} \varepsilon'}_{\mathfrak{T}^{k} m/p}\big],
\end{align*}
and the same for derivatives due to \eqref{Rel3}.
By $\lfloor \cdot \rfloor$ the floor function 
or the integer part of a positive rational number is denoted. 
Here and in what follows we omit the arguments $(0,\tau)$ of theta constants and theta derivatives.
Now replace $[{}^{\varepsilon'}_{\varepsilon}]$ by
$[{}^{\mathfrak{T}^{k} \varepsilon'}_{\mathfrak{T}^{k} \varepsilon}]$ in 
\eqref{MainExpr}, and obtain  the following
\begin{multline}\label{MainExpr2}
3\theta\big[{}^{\mathfrak{T}^{k+1}\varepsilon' }_{\mathfrak{T}^{k+1}m/p}\big]
 \theta'\big[{}^{\mathfrak{T}^k \varepsilon'}_{\mathfrak{T}^k m/p}\big]
-  \theta'\big[{}^{\mathfrak{T}^{k+1} \varepsilon' }_{\mathfrak{T}^{k+1} m/p}\big]
\theta\big[{}^{\mathfrak{T}^k \varepsilon'}_{\mathfrak{T}^k m/p}\big]  \\
= \mathrm{e}^{6\pi \imath (1- \lfloor 3^{k} m/p \rfloor) \mathfrak{T}^{k} \varepsilon'
- 2 \pi \imath \lfloor 3^{k+1} m/p \rfloor \mathfrak{T}^{k+1} \varepsilon'} 
\theta'\big[{}^{1/2}_{1/2}\big]
\theta^3\big[{}^{\frac{1}{2} - 2 \cdot 3^k \varepsilon'}_{\frac{1}{2} - 2 \cdot 3^k m/p}\big]
\theta^{-2}\big[{}^{\mathfrak{T}^k \varepsilon'}_{\mathfrak{T}^k m/p}\big].
\end{multline}
It is convenient to compute $\theta'\big[{}^{1/2}_{1/2}\big]$ from the Jacobi identity \eqref{JacobiIdentity},
though we keep this theta derivative for brevity.

\begin{thm}
Theta derivatives with characteristics from
the ordered set
$\mathcal{S}\big( \big[{}^{\ \ \varepsilon'}_{\,m/p}\big] \big) 
= \{\mathfrak{T}^k  \big[{}^{\ \ \varepsilon'}_{\,m/p}\big] \mid 0 \leqslant k \leqslant \mathfrak{t} -1\}$
on which multiplication by $3$ acts periodically with period $\mathfrak{t}$ are expressed as follows
\begin{multline}\label{ThetaDerSol}
\theta'\big[{}^{\mathfrak{T}^{k} \varepsilon'}_{\mathfrak{T}^{k} m/p}\big]
= \bigg((3^{\mathfrak{t}}-1) \prod_{l=0}^{\mathfrak{t}-1} 
\theta\big[{}^{\mathfrak{T}^{l}\varepsilon' }_{\mathfrak{T}^{l}m/p}\big] \bigg)^{-1} 
\theta'\big[{}^{1/2}_{1/2}\big]
\theta\big[{}^{\mathfrak{T}^{k}\varepsilon' }_{\mathfrak{T}^{k}m/p}\big] \times \\  \times
\sum_{j=k}^{\mathfrak{t}-1+k}  3^{\mathfrak{t}-1+k-j} 
\mathrm{e}^{6\pi \imath (1- \lfloor 3^{j} m/p \rfloor) \mathfrak{T}^{j} \varepsilon'
- 2 \pi \imath \lfloor 3^{j+1} m/p \rfloor \mathfrak{T}^{j+1} \varepsilon'}  \times \\  \times
\theta^3\big[{}^{\frac{1}{2} - 2 \cdot 3^{j} \varepsilon'}_{\frac{1}{2} - 2\cdot 3^{j} m/p}\big]
\theta^{-2}\big[{}^{\mathfrak{T}^{j} \varepsilon'}_{\mathfrak{T}^{j} m/p}\big]
\prod_{\substack{l = k \\ l\neq j \\ l \neq j+1}}^{\mathfrak{t}-1+k} 
\theta\big[{}^{\mathfrak{T}^{l}\varepsilon' }_{\mathfrak{T}^{l}m/p}\big].
\end{multline}
\end{thm}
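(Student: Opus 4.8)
The plan is to read the family of identities \eqref{MainExpr2}, as $k$ runs over the orbit, as a single closed first‑order linear recurrence along $\mathcal{S}\big(\big[{}^{\ \varepsilon'}_{m/p}\big]\big)$ and to solve it by a finite geometric summation. Abbreviate $\Theta_k := \theta\big[{}^{\mathfrak{T}^k\varepsilon'}_{\mathfrak{T}^k m/p}\big]$ and $\Theta'_k := \theta'\big[{}^{\mathfrak{T}^k\varepsilon'}_{\mathfrak{T}^k m/p}\big]$, and let $R_k$ denote the entire right‑hand side of \eqref{MainExpr2}, so that
\[ 3\Theta_{k+1}\Theta'_k - \Theta'_{k+1}\Theta_k = R_k, \qquad k\in\mathbb{Z}. \]
By Lemma \ref{L:PeriodT} multiplication by $3$ acts on $\big[{}^{\varepsilon'}_{m/p}\big]$ with period $\mathfrak{t}$, so $\Theta_k$ and $\Theta'_k$ are $\mathfrak{t}$‑periodic in $k$; in particular $\Theta_{k+\mathfrak{t}}=\Theta_k$ and $\Theta'_{k+\mathfrak{t}}=\Theta'_k$. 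I would assume here that the constants $\Theta_0,\dots,\Theta_{\mathfrak{t}-1}$ are not identically zero in $\tau$, which excludes only the degenerate orbit $\big\{\big[{}^{1/2}_{1/2}\big]\big\}$, whose theta derivative is furnished directly by \eqref{JacobiIdentity}, so that nothing is lost.

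First I would divide the recurrence by $\Theta_k\Theta_{k+1}$ and set $Y_k := \Theta'_k/\Theta_k$, reducing it to the scalar recurrence
\[ 3Y_k - Y_{k+1} = \frac{R_k}{\Theta_k\Theta_{k+1}} =: S_k . \]
Iterating $Y_{k+1}=3Y_k-S_k$ exactly $\mathfrak{t}$ steps gives $Y_{k+\mathfrak{t}} = 3^{\mathfrak{t}}Y_k - \sum_{i=0}^{\mathfrak{t}-1}3^{\mathfrak{t}-1-i}S_{k+i}$, and feeding in the periodicity $Y_{k+\mathfrak{t}}=Y_k$ yields
\[ (3^{\mathfrak{t}}-1)\,Y_k = \sum_{i=0}^{\mathfrak{t}-1}3^{\mathfrak{t}-1-i}S_{k+i} . \]
Since $\mathfrak{t}\geqslant 1$ we have $3^{\mathfrak{t}}-1\neq 0$, so this determines $Y_k$, hence $\Theta'_k=\Theta_k Y_k$, uniquely. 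Equivalently, the $\mathfrak{t}$ instances of \eqref{MainExpr2} for $k=0,\dots,\mathfrak{t}-1$ form a linear system in $\Theta'_0,\dots,\Theta'_{\mathfrak{t}-1}$ with cyclic coefficient matrix $3I-P$ ($P$ the shift), nonsingular because $\det(3I-P)=3^{\mathfrak{t}}-1$.

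It then remains to unwind the abbreviations. Reindexing by $j=k+i$ turns $\sum_{i=0}^{\mathfrak{t}-1}3^{\mathfrak{t}-1-i}S_{k+i}$ into $\sum_{j=k}^{\mathfrak{t}-1+k}3^{\mathfrak{t}-1+k-j}S_j$. Substituting the explicit form of $R_j$ from \eqref{MainExpr2} — its exponential prefactor is the character $\mathrm{e}^{6\pi\imath(1-\lfloor 3^j m/p\rfloor)\mathfrak{T}^j\varepsilon'-2\pi\imath\lfloor 3^{j+1}m/p\rfloor\mathfrak{T}^{j+1}\varepsilon'}$ appearing in \eqref{ThetaDerSol}, and it carries $\theta'\big[{}^{1/2}_{1/2}\big]$ and the cubic term $\theta^3\big[{}^{\frac12-2\cdot 3^j\varepsilon'}_{\frac12-2\cdot 3^j m/p}\big]$ unchanged, together with a factor $\theta^{-2}\big[{}^{\mathfrak{T}^j\varepsilon'}_{\mathfrak{T}^j m/p}\big]$ — one sees that $S_j=R_j/(\Theta_j\Theta_{j+1})$ carries $\Theta_j^{-3}\Theta_{j+1}^{-1}$. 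Finally, using $\mathfrak{t}$‑periodicity of the $\Theta_l$ to write, with indices read modulo $\mathfrak{t}$,
\[ \frac{1}{\Theta_j\Theta_{j+1}} = \bigg(\prod_{l=0}^{\mathfrak{t}-1}\Theta_l\bigg)^{-1}\prod_{\substack{l=k\\ l\neq j,\ j+1}}^{\mathfrak{t}-1+k}\Theta_l , \]
and multiplying $\Theta_k Y_k$ through by this, converts the expression into precisely the right‑hand side of \eqref{ThetaDerSol}.

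Everything above is mechanical once the recurrence is in place; the one point that needs genuine care is the bookkeeping of the exponential characters in $R_k$ — verifying that the floor‑function exponents of \eqref{MainExpr2}, evaluated at the shifted index $j$, reassemble into exactly the character displayed in \eqref{ThetaDerSol}, and that the normalisation by $\prod_l\Theta_l$ and the deletion of the two factors $\Theta_j,\Theta_{j+1}$ are consistent all the way around the cycle (the case $j\equiv\mathfrak{t}-1+k$, where $j+1$ wraps to $k$, being the only slightly delicate one). I expect this character‑tracking to be the main, though still essentially routine, obstacle; the structural core — a periodic first‑order linear recurrence solved by a finite geometric sum — is short.
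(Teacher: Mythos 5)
Your argument is correct, and it reaches the same formula through a noticeably different mechanism than the paper. The paper assembles the $\mathfrak{t}$ instances of \eqref{MainExpr2} into an explicit $\mathfrak{t}\times\mathfrak{t}$ cyclic system $Ax=B$ in the unknowns $\theta'\big[{}^{\mathfrak{T}^k\varepsilon'}_{\mathfrak{T}^k m/p}\big]$ and invokes Cramer's rule, simply stating the evaluations $\det A=(3^{\mathfrak{t}}-1)\prod_k a_k$ and the cofactor sums $\Delta_{k+1}$ without derivation. You instead normalize by $\Theta_k\Theta_{k+1}$, pass to $Y_k=\Theta'_k/\Theta_k$, and solve the resulting scalar first-order recurrence $3Y_k-Y_{k+1}=S_k$ around the cycle by a finite geometric sum, with the periodicity $Y_{k+\mathfrak{t}}=Y_k$ producing the factor $3^{\mathfrak{t}}-1$. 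This buys something concrete: your telescoping argument effectively proves the determinant identities that the paper only asserts (equivalently, it diagonalizes the Cramer computation), and it makes the uniqueness of the solution transparent. You also make explicit two points the paper leaves implicit: the nonvanishing of the orbit theta constants (needed for the division, and harmless since a periodic orbit containing $\big[{}^{1/2}_{1/2}\big]$ is the singleton fixed orbit handled by \eqref{JacobiIdentity}), and the wrap-around convention $\Theta_{\mathfrak{t}+k}=\Theta_k$ needed to read the exclusion $l\neq j+1$ in the final product modulo $\mathfrak{t}$ for the term $j=\mathfrak{t}-1+k$ (the paper's convention $a_{\mathfrak{t}+k}\equiv a_k$ plays the same role). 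The character bookkeeping you flag as the remaining chore is in fact immediate, since each $S_j$ carries the exponential prefactor of \eqref{MainExpr2} verbatim with $k$ replaced by $j$, which is exactly the character displayed in \eqref{ThetaDerSol}; so there is no gap, only a relabeling to confirm.
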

\begin{proof}
Each pair of characteristics from the set $\mathcal{S}\big( \big[{}^{\ \ \varepsilon'}_{\,m/p}\big]\big)$, namely:
$[{}^{\mathfrak{T}^k \varepsilon'}_{\mathfrak{T}^k m/p}]$
and $[{}^{\mathfrak{T}^{k+1} \varepsilon'}_{\mathfrak{T}^{k+1} m/p}]$ with $0 \leqslant k \leqslant \mathfrak{t}-1$,
gives rise to an equation \eqref{MainExpr2}. Therefore, we obtain a system of $\mathfrak{t}$ equations
which are linear with respect to unknowns
$\{x_{k+1} = \theta'\big[{}^{\mathfrak{T}^{k} \varepsilon'}_{\mathfrak{T}^{k} m/p}\big](0,\tau) \mid
0 \leqslant k \leqslant \mathfrak{t}-1\}$. The system has the form $A x = B$ with the vector $x=(x_i)$ of  unknowns,
the coefficient matrix $A$ whose non-zero entries are the following
\begin{align*}
&A_{i,i} = 3 a_{i},\ i=1,\,\dots,\, \mathfrak{t}-1, \qquad 
A_{\mathfrak{t},\mathfrak{t}} = 3a_0,\qquad
\text{where }
a_k = \theta\big[{}^{\mathfrak{T}^{k}\varepsilon' }_{\mathfrak{T}^{k}\varepsilon}\big],\\
&A_{i,i+1} = -a_{i-1},\ i=1,\,\dots,\, \mathfrak{t}-1, \qquad
A_{\mathfrak{t},1} = - a_{\mathfrak{t}-1},
\end{align*}
and other entries are zero;
and the constant vector $B$ with entries:
\begin{align*}
&B_{k+1} = b_{k} =
\mathrm{e}^{6\pi \imath (1- \lfloor 3^{k} m/p \rfloor) \mathfrak{T}^{k} \varepsilon'
- 2 \pi \imath \lfloor 3^{k+1} m/p \rfloor \mathfrak{T}^{k+1} \varepsilon'} \times \\ 
&\qquad\qquad\qquad \times \theta'\big[{}^{1/2}_{1/2}\big]
\theta^3\big[{}^{\frac{1}{2} - 2 \cdot 3^k \varepsilon'}_{\frac{1}{2} - 2 \cdot 3^k m/p}\big]
\theta^{-2}\big[{}^{\mathfrak{T}^k \varepsilon'}_{\mathfrak{T}^k m/p}\big],
\quad k=0,\,\dots,\, \mathfrak{t}-1.
\end{align*}
In what follows assume that $a_{\mathfrak{t}+k} \equiv a_{k}$ and $b_{\mathfrak{t}+k} \equiv b_{k}$ 
for any integer $k \geqslant 0$.
The system can be solved by Cramer's rule, since matrix $A$ is non-singular, at that 
\begin{align*}
&\Delta = \det A = (3^{\mathfrak{t}}-1) \prod_{k =0}^{\mathfrak{t}-1} a_{k},\\
&\Delta_{k+1} = a_{k} \sum_{j=k}^{\mathfrak{t} -1+k}  3^{\mathfrak{t}-1+k - j} b_j 
\prod_{\substack{l = k\\ l\neq j \\ l\neq j+1}}^{\mathfrak{t}-1+k} a_{l},
\quad k=0,\,\dots,\, \mathfrak{t}-1.
\end{align*}
The solution is unique and has the form \eqref{ThetaDerSol}.
\end{proof}

\begin{rmk}
If $\varepsilon = m/p >1/2$ or $\varepsilon' >1/2$ then 
with the help of \eqref{ThCRels}  characteristic~$\big[{}^{\varepsilon'}_{\varepsilon}\big]$ can be reduced to
the form $\big[{}^{\delta'}_{\delta}\big]$ where the both $\delta$ and $\delta'$ are less than $1/2$.
This will reduce the number of equations twice, and simplify expression \eqref{ThetaDerSol}. 
\end{rmk}
\begin{rmk}\label{Rmk3}
When the denominator of $\varepsilon$ or $\varepsilon'$ is divisible by $3$, 
the action of multiplication by $3$ on the set generated by this characteristic is not periodic, but comes to a stationary value.
In this case the system of equations \eqref{MainExpr2} is solved starting from the last equation, and theta derivatives with
a stationary value in characteristics should be found in advance, see Examples~\ref{E:p6} and ~\ref{E:p65}.
\end{rmk}

Now we illustrate how to apply formula \eqref{ThetaDerSol} with different examples.

\begin{example}
The simplest case $p=2$ leads to the identical zero on the both sides of the equality
due to functions $\theta\big[{}^{1/2}_{1/2}\big]$, $\theta'\big[{}^{\ 0}_{1/2}\big]$, 
$\theta'\big[{}^{1/2}_{\ 0}\big]$ are odd.
\end{example}

\begin{example}
In the case of $p=3$ there are two orbits: $\mathcal{O}_{1/3} = \{1/3,\,0\}$ 
and $\mathcal{O}_{2/3} = \{2/3,\, 0\}$, 
which produce four sets of characteristics:
\begin{gather*}
\big\{\big[{}^{\ 0}_{1/3}\big],\, \big[{}^{0}_{0}\big] \big\},\quad
\big\{\big[{}^{1/3}_{\ 0}\big],\, \big[{}^{0}_{0}\big] \big\},\quad
\big\{\big[{}^{1/3}_{1/3}\big],\, \big[{}^{0}_{0}\big]\big\},\quad
\big\{ \big[{}^{1/3}_{2/3}\big],\, \big[{}^{0}_{0}\big] \big\}.
\end{gather*}
Since $\theta'\big[{}^{0}_{0}\big]$ is odd, the
following expressions for theta derivatives are derived directly from \eqref{MainExpr}:
\begin{align*}
&\theta'\big[{}^{\ 0}_{1/3}\big] = - \frac{\pi}{3} \theta\big[{}^{1/2}_{\ 0}\big] \theta\big[{}^{\;0}_{1/2}\big]
\theta^3 \big[{}^{1/2}_{1/6}\big] \theta^{-2}\big[{}^{\ 0}_{1/3}\big],\\
&\theta'\big[{}^{1/3}_{\ 0}\big] = \m \frac{\pi}{3} \theta\big[{}^{1/2}_{\ 0}\big] \theta\big[{}^{\;0}_{1/2}\big]
\theta^3 \big[{}^{1/6}_{1/2}\big] \theta^{-2}\big[{}^{1/3}_{\ 0}\big],\\
&\theta'\big[{}^{1/3}_{1/3}\big] = - \frac{\pi}{3} \theta\big[{}^{1/2}_{\ 0}\big] \theta\big[{}^{\;0}_{1/2}\big]
\theta^3 \big[{}^{1/6}_{1/6}\big] \theta^{-2}\big[{}^{1/3}_{1/3}\big],\\
&\theta'\big[{}^{1/3}_{2/3}\big] = - \frac{\pi}{3} \theta\big[{}^{1/2}_{\ 0}\big] \theta\big[{}^{\;0}_{1/2}\big]
\theta^3 \big[{}^{1/6}_{5/6}\big] \theta^{-2}\big[{}^{1/3}_{2/3}\big].
\end{align*}
At the same time, due to \eqref{ThCRels} theta constants and derivatives with 
characteristics $\big[{}^{\ 0}_{2/3}\big]$, $\big[{}^{2/3}_{\ 0}\big]$ $\big[{}^{2/3}_{2/3}\big]$, 
$\big[{}^{2/3}_{1/3}\big]$ relate to ones with characteristics $\big[{}^{\ 0}_{1/3}\big]$, 
$\big[{}^{1/3}_{\ 0}\big]$ $\big[{}^{1/3}_{1/3}\big]$, $\big[{}^{1/3}_{2/3}\big]$, respectively.
Therefore, theta identities with 
$\theta'\big[{}^{\ 0}_{2/3}\big]$, $\theta'\big[{}^{2/3}_{\ 0}\big]$,
$\theta'\big[{}^{2/3}_{2/3}\big]$,  $\theta'\big[{}^{2/3}_{1/3}\big]$ are found as well.

One can combine orbit $\mathcal{O}_{1/3}$ with $\mathcal{O}_{1/2}=\{1/2\}$,
and consider sets of characteristics
$\big\{\big[{}^{1/2}_{1/3}\big],\, \big[{}^{1/2}_{\ 0}\big] \big\}$, 
$\big\{\big[{}^{1/3}_{1/2}\big],\, \big[{}^{\ 0}_{1/2}\big] \big\}$.
Then \eqref{MainExpr} implies 
\begin{align*}
&\theta'\big[{}^{1/2}_{1/3}\big] = - \frac{\pi}{3} \theta\big[{}^{0}_{0}\big] \theta\big[{}^{\;0}_{1/2}\big]
\theta^3 \big[{}^{1/2}_{1/6}\big] \theta^{-2}\big[{}^{1/2}_{1/3}\big],\\
&\theta'\big[{}^{1/3}_{1/2}\big] = - \frac{\pi}{3} \theta\big[{}^{0}_{0}\big] \theta\big[{}^{1/2}_{\ 0}\big]
\theta^3 \big[{}^{1/6}_{1/2}\big] \theta^{-2}\big[{}^{1/3}_{1/2}\big].
\end{align*}
The expressions for theta derivatives in this example coincide 
with ones presented in \cite[subsect.\,5.1, 5.3, 5.5, 5.6]{M3}.
\end{example}

\begin{example}\label{Ex:p4}
In the case of $p=4$, two orbits exist: $\mathcal{O}_{1/4} = \{1/4,\, 3/4\}$,
$\mathcal{O}_{2/4} = \{1/2\}$.
Combining these orbits, write down several sets of characteristics:
\begin{gather*}
\big\{\big[{}^{\ 0}_{1/4}\big],\, \big[{}^{\ 0}_{3/4}\big] \big\},\quad
\big\{\big[{}^{1/4}_{\ 0}\big],\, \big[{}^{3/4}_{\ 0}\big] \big\},\quad
\big\{\big[{}^{1/4}_{1/4}\big],\, \big[{}^{3/4}_{3/4}\big]\big\},\quad
\big\{ \big[{}^{1/4}_{3/4}\big],\, \big[{}^{3/4}_{1/4}\big] \big\},\\
\big\{\big[{}^{1/2}_{1/4}\big],\, \big[{}^{1/2}_{3/4}\big] \big\},\quad
\big\{\big[{}^{1/4}_{1/2}\big],\, \big[{}^{3/4}_{1/2}\big]\big\}.
\end{gather*}
Each set  produces a systems of two equations of the form \eqref{MainExpr2}.
However, due to \eqref{ThCRels} each of these systems is reduced to one.
Therefore, it is more convenient to start computation from \eqref{MainExpr}.
The following expressions are obtained:
\begin{align*}
&\theta'\big[{}^{\ 0}_{1/4}\big]
 = - \frac{\pi}{4} \theta^4\big[{}^{1/2}_{\ 0}\big]
\theta\big[{}^0_0\big] \theta\big[{}^{\ 0}_{1/2 }\big]
\theta^{-3}\big[{}^{\ 0}_{1/4}\big],\\
&\theta'\big[{}^{1/4}_{\ 0}\big]
 = \m \imath \frac{\pi}{4} \theta^4\big[{}^{\ 0}_{1/2 }\big]
\theta\big[{}^0_0\big] \theta\big[{}^{1/2}_{\ 0}\big] 
\theta^{-3}\big[{}^{1/4}_{\ 0}\big], \\
&\theta'\big[{}^{1/4}_{1/4}\big]
 = -\frac{\pi}{4} \theta^4\big[{}^0_0\big]
 \theta\big[{}^{\ 0}_{1/2 }\big] \theta\big[{}^{1/2}_{\ 0}\big]
\theta^{-3}\big[{}^{1/4}_{1/4}\big], \\
&\theta'\big[{}^{1/4}_{3/4}\big]
 = \m \frac{ \pi}{4} \theta^4\big[{}^0_0\big]
 \theta\big[{}^{\ 0}_{1/2 }\big] \theta\big[{}^{1/2}_{\ 0}\big]
\theta^{-3}\big[{}^{1/4}_{3/4}\big],\\
&\theta'\big[{}^{1/4}_{1/2}\big] = - \imath \frac{ \pi}{4} 
 \theta^4 \big[{}^{\ 0}_{1/2 }\big] \theta\big[{}^0_0\big]
 \theta\big[{}^{1/2}_{\ 0}\big]
\theta^{-3}\big[{}^{1/4}_{1/2}\big], \\
& \theta'\big[{}^{1/2}_{1/4}\big] = - \frac{ \pi}{4} 
 \theta^4 \big[{}^{1/2}_{\ 0}\big] \theta\big[{}^0_0\big]
 \theta\big[{}^{\ 0}_{1/2}\big]
\theta^{-3}\big[{}^{1/2}_{1/4}\big].
\end{align*}
These expressions confirm the results of  \cite[subsect.\,3.1--3.2]{M3}.
\end{example}

\begin{example} 
In the case of $p=5$
only one orbit $\mathcal{O}_{1/5}=\{1/5,\, 3/5,\, 4/5,\, 2/5\}$
exists. Let a collection of characteristics be 
$\big\{\big[{}^{1/5}_{2/5}\big],\, \big[{}^{3/5}_{1/5}\big],\,\big[{}^{4/5}_{3/5}\big],\, 
\big[{}^{2/5}_{4/5}\big]\big\}$, 
then by formula \eqref{ThetaDerSol} obtain
\begin{multline*}
\theta'\big[{}^{1/5}_{2/5}\big] = \Big(80 \theta\big[{}^{1/5}_{2/5}\big] 
\theta\big[{}^{3/5}_{1/5}\big] \theta\big[{}^{4/5}_{3/5}\big] \theta\big[{}^{2/5}_{4/5}\big] \Big)^{-1}
\theta'\big[{}^{1/2}_{1/2}\big] \theta\big[{}^{1/5}_{2/5}\big] \times \\ \times
\Big(27 \theta^3 \big[{}^{\m 1/10}_{-3/10}\big]  \theta^{-2}\big[{}^{1/5}_{2/5}\big] 
\theta\big[{}^{4/5}_{3/5}\big] \theta\big[{}^{2/5}_{4/5}\big] 
+ 9 \rme^{-4 \pi \imath/5} \theta^3 \big[{}^{- 7/10}_{-19/10}\big]  \theta^{-2}\big[{}^{3/5}_{1/5}\big] 
\theta\big[{}^{1/5}_{2/5}\big] \theta\big[{}^{2/5}_{4/5}\big] \\
+ 3 \rme^{2 \pi \imath/5} \theta^3 \big[{}^{-31/10}_{-67/10}\big]  \theta^{-2}\big[{}^{4/5}_{3/5}\big] 
\theta\big[{}^{1/5}_{2/5}\big] \theta\big[{}^{3/5}_{1/5}\big] 
+ \rme^{- 2 \pi \imath/5} \theta^3 \big[{}^{-103/10}_{-211/10}\big]  \theta^{-2}\big[{}^{2/5}_{4/5}\big] 
\theta\big[{}^{3/5}_{1/5}\big] \theta\big[{}^{4/5}_{3/5}\big] \Big).
\end{multline*}
Using \eqref{TCRel}  find
\begin{align*}
&\theta\big[{}^{2/5}_{4/5}\big] = \rme^{4\pi \imath/5}  \theta \big[{}^{3/5}_{1/5}\big],&  
&\theta\big[{}^{4/5}_{3/5}\big] = \rme^{-2\pi \imath/5} \theta\big[{}^{1/5}_{2/5}\big],&\\
&\theta\big[{}^{\m 1/10}_{-3/10}\big] = \rme^{-\pi \imath/5}  \theta \big[{}^{1/10}_{7/10}\big],&  
&\theta\big[{}^{- 7/10}_{-19/10}\big] = \rme^{-6 \pi \imath/5} \theta\big[{}^{3/10}_{1/10}\big],&\\
&\theta\big[{}^{-31/10}_{-67/10}\big] = \rme^{6 \pi \imath/5}  \theta \big[{}^{1/10}_{7/10}\big],&  
&\theta\big[{}^{-103/10}_{-211/10}\big] = \rme^{3 \pi \imath/5} \theta\big[{}^{3/10}_{1/10}\big].&
\end{align*}
Thus, 
\begin{multline*}
\theta'\big[{}^{1/5}_{2/5}\big] = \Big(80 \rme^{2\pi \imath/5}  \theta^2\big[{}^{1/5}_{2/5}\big] 
\theta^2\big[{}^{3/5}_{1/5}\big] \Big)^{-1}
\theta'\big[{}^{1/2}_{1/2}\big] \theta\big[{}^{1/5}_{2/5}\big] \times \\ \times
\Big(27 \rme^{-\pi \imath/5}\theta^3 \big[{}^{1/10}_{7/10}\big]  
\theta^{-1}\big[{}^{1/5}_{2/5}\big]  \theta\big[{}^{3/5}_{1/5}\big] 
+ 9 \rme^{2 \pi \imath/5} \theta^3 \big[{}^{3/10}_{1/10}\big]  \theta^{-1}\big[{}^{3/5}_{1/5}\big] 
\theta\big[{}^{1/5}_{2/5}\big] \\
+ 3 \rme^{4 \pi \imath/5} \theta^3 \big[{}^{1/10}_{7/10}\big]  \theta^{-1}\big[{}^{1/5}_{2/5}\big] 
 \theta\big[{}^{3/5}_{1/5}\big] 
+ \rme^{-3 \pi \imath/5} \theta^3 \big[{}^{3/10}_{1/10}\big]  \theta^{-1}\big[{}^{3/5}_{1/5}\big] 
\theta\big[{}^{1/5}_{2/5}\big] \Big) \\
= \frac{1}{10} \theta'\big[{}^{1/2}_{1/2}\big]  
\Big(3 \rme^{- 3\pi \imath/5} \theta^3 \big[{}^{1/10}_{7/10}\big] \theta^{-2}\big[{}^{1/5}_{2/5}\big] 
\theta^{-1}\big[{}^{3/5}_{1/5}\big] 
+ \theta^3 \big[{}^{3/10}_{1/10}\big]  \theta^{-3}\big[{}^{3/5}_{1/5}\big]  \Big).
\end{multline*}
Formula \eqref{ThetaDerSol} for $\theta'\big[{}^{3/5}_{1/5}\big]$ reads as follows
\begin{multline*}
\theta'\big[{}^{3/5}_{1/5}\big] = \Big(80 \theta\big[{}^{1/5}_{2/5}\big] 
\theta\big[{}^{3/5}_{1/5}\big] \theta\big[{}^{4/5}_{3/5}\big] \theta\big[{}^{2/5}_{4/5}\big] \Big)^{-1}
\theta'\big[{}^{1/2}_{1/2}\big] \theta\big[{}^{3/5}_{1/5}\big] \times \\ \times
\Big(27 \rme^{-4 \pi \imath/5} \theta^3 \big[{}^{- 7/10}_{-19/10}\big]  \theta^{-2}\big[{}^{3/5}_{1/5}\big] 
\theta\big[{}^{1/5}_{2/5}\big] \theta\big[{}^{2/5}_{4/5}\big] 
+ 9 \rme^{2 \pi \imath/5} \theta^3 \big[{}^{-31/10}_{-67/10}\big]  \theta^{-2}\big[{}^{4/5}_{3/5}\big] 
\theta\big[{}^{1/5}_{2/5}\big] \theta\big[{}^{3/5}_{1/5}\big] \\
+ 3\rme^{- 2 \pi \imath/5} \theta^3 \big[{}^{-103/10}_{-211/10}\big]  \theta^{-2}\big[{}^{2/5}_{4/5}\big] 
\theta\big[{}^{3/5}_{1/5}\big] \theta\big[{}^{4/5}_{3/5}\big]
+\theta^3 \big[{}^{\m 1/10}_{-3/10}\big]  \theta^{-2}\big[{}^{1/5}_{2/5}\big] 
\theta\big[{}^{4/5}_{3/5}\big] \theta\big[{}^{2/5}_{4/5}\big]  \Big),
\end{multline*}
that gives
\begin{equation*}
\theta'\big[{}^{3/5}_{1/5}\big] 
= \frac{1}{10} \theta'\big[{}^{1/2}_{1/2}\big] 
\Big(\rme^{2\pi \imath/5}  \theta^3 \big[{}^{1/10}_{7/10}\big] \theta^{-3}\big[{}^{1/5}_{2/5}\big] 
+ 3 \theta^3 \big[{}^{3/10}_{1/10}\big] \theta^{-2}\big[{}^{3/5}_{1/5}\big] \theta^{-1}\big[{}^{1/5}_{2/5}\big]  \Big).
\end{equation*}
Applying \eqref{JacobiIdentity}, obtain
\begin{align*}
&\theta'\big[{}^{1/5}_{2/5}\big] = - \frac{\pi}{10} 
\theta\big[{}^{0}_{0}\big] \theta\big[{}^{\ 0}_{1/2}\big] \theta\big[{}^{1/2}_{\ 0}\big]  
\Big(3 \rme^{- 3\pi \imath/5} \theta^3 \big[{}^{1/10}_{7/10}\big]  
\theta^{-2}\big[{}^{1/5}_{2/5}\big] \theta^{-1}\big[{}^{3/5}_{1/5}\big] 
+ \theta^3 \big[{}^{3/10}_{1/10}\big]  
\theta^{-3}\big[{}^{3/5}_{1/5}\big]  \Big), \\
&\theta'\big[{}^{3/5}_{1/5}\big] = - \frac{\pi}{10} 
\theta\big[{}^{0}_{0}\big] \theta\big[{}^{\ 0}_{1/2}\big] \theta\big[{}^{1/2}_{\ 0}\big] 
\Big(\rme^{2\pi \imath/5}  \theta^3 \big[{}^{1/10}_{7/10}\big] \theta^{-3}\big[{}^{1/5}_{2/5}\big] 
+ 3 \theta^3 \big[{}^{3/10}_{1/10}\big] \theta^{-2}\big[{}^{3/5}_{1/5}\big]  \theta^{-1}\big[{}^{1/5}_{2/5}\big] \Big).
\end{align*}
\end{example}

\begin{example}\label{E:p6}
In the case of $p=6$ the new values, not considered in the previous examples,
are $1/6$, $5/6$ with orbits $\mathcal{O}_{1/6}=\{1/6,\,1/2\}$, $\mathcal{O}_{5/6}=\{5/6,\,1/2\}$. 
The following sets of characteristics are constructed from these orbits:
\begin{gather*}
\big\{\big[{}^{\ 0}_{1/6}\big],\, \big[{}^{\ 0}_{1/2}\big] \big\},\quad
\big\{\big[{}^{1/6}_{\ 0}\big],\, \big[{}^{1/2}_{\ 0}\big] \big\},\quad
\big\{\big[{}^{1/6}_{1/6}\big],\, \big[{}^{1/2}_{1/2}\big]\big\},\quad
\big\{ \big[{}^{1/6}_{5/6}\big],\, \big[{}^{1/2}_{1/2}\big] \big\}.
\end{gather*}
Here the fundamental identity  \eqref{MainExpr}  is again more convenient for computation,
and two expressions can be found:
\begin{align*}
&\theta'\big[{}^{\ 0}_{1/6}\big] = -\frac{\pi}{3} 
\theta\big[{}^{0}_{0}\big] \theta\big[{}^{1/2}_{\ 0}\big] 
\theta^3 \big[{}^{1/2}_{1/6}\big] \theta^{-2}\big[{}^{\ 0}_{1/6}\big],\\
&\theta'\big[{}^{1/6}_{\ 0}\big] = \m \frac{\pi}{3} 
\theta\big[{}^{0}_{0}\big] \theta\big[{}^{\ 0}_{1/2}\big] 
\theta^3 \big[{}^{1/6}_{1/2}\big] \theta^{-2}\big[{}^{1/6}_{\ 0}\big].
\end{align*}
Expressions for $\theta' \big[{}^{1/6}_{1/6}\big]$ and $\theta' \big[{}^{1/6}_{5/6}\big]$ 
can not be obtained from~\eqref{MainExpr} due to vanishing $\theta \big[{}^{1/2}_{1/2}\big]$
which serves as a coefficient at the desired theta derivatives.
In \cite[Subsect.\;6.1, 6.3]{M3} expressions for these theta derivatives are found from other elliptic functions.

One also can combine $\mathcal{O}_{1/6}$ with $\mathcal{O}_{1/2}$, $\mathcal{O}_{1/3}$, 
and  $\mathcal{O}_{1/4}$
\begin{gather*}
\big\{\big[{}^{1/2}_{1/6}\big],\, \big[{}^{1/2}_{1/2}\big] \big\},\quad
\big\{\big[{}^{1/6}_{1/2}\big],\, \big[{}^{1/2}_{1/2}\big] \big\}, \\
\big\{\big[{}^{1/3}_{1/6}\big],\, \big[{}^{\ 0}_{1/2}\big]\big\},\quad
\big\{ \big[{}^{2/3}_{1/6}\big],\, \big[{}^{\ 0}_{1/2}\big] \big\},\quad
\big\{\big[{}^{1/6}_{1/3}\big],\, \big[{}^{1/2}_{\ 0}\big]\big\},\quad
\big\{ \big[{}^{1/6}_{2/3}\big],\, \big[{}^{1/2}_{\ 0}\big] \big\},\\
\big\{\big[{}^{1/4}_{1/6}\big],\, \big[{}^{3/4}_{1/2}\big]\big\},\quad
\big\{ \big[{}^{3/4}_{1/6}\big],\, \big[{}^{1/4}_{1/2}\big] \big\},\quad
\big\{\big[{}^{1/6}_{1/4}\big],\, \big[{}^{1/2}_{3/4}\big]\big\},\quad
\big\{ \big[{}^{1/6}_{3/4}\big],\, \big[{}^{1/2}_{1/4}\big] \big\},
\end{gather*}
Directly from \eqref{MainExpr} find
\begin{align*}
&\theta'\big[{}^{1/3}_{1/6}\big] = -\frac{\pi}{3} 
\theta\big[{}^{0}_{0}\big] \theta\big[{}^{1/2}_{\ 0}\big] 
\theta^3 \big[{}^{5/6}_{1/6}\big] \theta^{-2}\big[{}^{1/3}_{1/6}\big],\\
&\theta'\big[{}^{2/3}_{1/6}\big] = -\frac{\pi}{3} 
\theta\big[{}^{0}_{0}\big] \theta\big[{}^{1/2}_{\ 0}\big] 
\theta^3 \big[{}^{1/6}_{1/6}\big] \theta^{-2}\big[{}^{2/3}_{1/6}\big], \\
&\theta'\big[{}^{1/6}_{1/3}\big] = \m \frac{\pi}{3} 
\theta\big[{}^{0}_{0}\big] \theta\big[{}^{\ 0}_{1/2}\big] 
\theta^3 \big[{}^{1/6}_{5/6}\big] \theta^{-2}\big[{}^{1/6}_{1/3}\big],\\
&\theta'\big[{}^{1/6}_{2/3}\big] = -\frac{\pi}{3} 
\theta\big[{}^{0}_{0}\big] \theta\big[{}^{\ 0}_{1/2}\big] 
\theta^3 \big[{}^{1/6}_{1/6}\big] \theta^{-2}\big[{}^{1/6}_{2/3}\big].
\end{align*}
Expressions for $\theta' \big[{}^{1/2}_{1/6}\big]$ and $\theta' \big[{}^{1/6}_{1/2}\big]$ 
are not achievable from \eqref{MainExpr}. 

Next, find  $\theta' \big[{}^{1/4}_{1/6}\big]$, that illustrates Remark~\ref{Rmk3}.
The desired theta derivative is derived from \eqref{MainExpr}
if the expression for $\theta' \big[{}^{3\varepsilon'}_{3\varepsilon}\big] = \theta' \big[{}^{3/4}_{1/2}\big]$
is known. The latter is found in Example~\ref{Ex:p4} since 
$\theta' \big[{}^{3/4}_{1/2}\big] =\imath  \theta' \big[{}^{1/4}_{1/2}\big]$. Thus,
\begin{multline*}
\theta'\big[{}^{1/4}_{1/6}\big] =
\frac{1}{3} \theta^3 \big[{}^{\ 0}_{1/6}\big] \theta^{-2}\big[{}^{1/4}_{1/6}\big]
\theta^{-1}\big[{}^{1/4}_{1/2}\big] \theta'\big[{}^{1/2}_{1/2}\big] 
-\frac{1}{3} \theta \big[{}^{1/4}_{1/6}\big]\theta^{-1}\big[{}^{1/4}_{1/2}\big]\theta'\big[{}^{1/4}_{1/2}\big] = \\
-\frac{\pi}{3} \theta\big[{}^{0}_{0}\big] \theta\big[{}^{\ 0}_{1/2}\big] \theta\big[{}^{1/2}_{\ 0}\big] 
\Big(\theta^3 \big[{}^{\ 0}_{1/6}\big] \theta^{-2}\big[{}^{1/4}_{1/6}\big]
\theta^{-1}\big[{}^{1/4}_{1/2}\big]  -  \frac{\imath}{4} \theta\big[{}^{\ 0}_{1/2}\big]^3 
\theta \big[{}^{1/4}_{1/6}\big] \theta^{-4} \big[{}^{1/4}_{1/2}\big]\Big).
\end{multline*}
Similarly find
\begin{align*}
&\theta'\big[{}^{3/4}_{1/6}\big] = 
-\imath \frac{\pi}{3} \theta\big[{}^{0}_{0}\big] \theta\big[{}^{\ 0}_{1/2}\big] \theta\big[{}^{1/2}_{\ 0}\big] 
\Big(\theta^3 \big[{}^{\ 0}_{1/6}\big] \theta^{-2}\big[{}^{3/4}_{1/6}\big]
\theta^{-1}\big[{}^{1/4}_{1/2}\big]  +  \frac{1}{4} \theta^3 \big[{}^{\ 0}_{1/2}\big] 
\theta \big[{}^{3/4}_{1/6}\big] \theta^{-4} \big[{}^{1/4}_{1/2}\big]\Big),\\
&\theta'\big[{}^{1/6}_{1/4}\big] = - \frac{\pi}{3} \theta\big[{}^{0}_{0}\big] \theta\big[{}^{\ 0}_{1/2}\big] \theta\big[{}^{1/2}_{\ 0}\big] 
\Big(\theta^3 \big[{}^{1/6}_{\ 0}\big] \theta^{-2}\big[{}^{1/6}_{1/4}\big]
\theta^{-1}\big[{}^{1/2}_{1/4}\big] - \frac{1}{4} \theta^3 \big[{}^{1/2}_{\ 0}\big]
\theta \big[{}^{1/6}_{1/4}\big] \theta^{-4} \big[{}^{1/2}_{1/4}\big] \Big),\\
&\theta'\big[{}^{1/6}_{3/4}\big] = - \frac{\pi}{3} \theta\big[{}^{0}_{0}\big] \theta\big[{}^{\ 0}_{1/2}\big] \theta\big[{}^{1/2}_{\ 0}\big] 
\Big(\theta^3 \big[{}^{1/6}_{\ 0}\big] \theta^{-2}\big[{}^{1/6}_{3/4}\big]
\theta^{-1}\big[{}^{1/2}_{1/4}\big] + \frac{1}{4} \theta^3 \big[{}^{1/2}_{\ 0}\big]
\theta \big[{}^{1/6}_{3/4}\big] \theta^{-4} \big[{}^{1/2}_{1/4}\big] \Big).
\end{align*}
\end{example}
\begin{rmk}
One can note that  the fundamental identity being applied to 
characteristics $\big[{}^{1/2}_{1/6}\big]$, $\big[{}^{1/6}_{1/2}\big]$, $\big[{}^{1/6}_{1/6}\big]$, 
and $\big[{}^{1/6}_{5/6}\big]$ becomes trivial.
Appropriate identities can be found by choosing another elliptic functions.
In \cite[sect.\,6]{M3} expressions for the theta derivatives with all mentioned characteristics are obtained,
for example Theorem 6.7 gives the following
\begin{multline*}
\theta' \big[{}^{1/2}_{1/6} \big](0,\tau) 
= \frac{\pi}{3} \theta\big[{}^{0}_{0} \big](0,\tau)\theta\big[{}^{\ 0}_{1/2} \big](0,\tau)
 \theta^4\big[{}^{1/2}_{1/6} \big](0,\tau) \theta^{-3}\big[{}^{1/2}_{1/3} \big](0,\tau) \\
 - \pi \theta^2\big[{}^{1/2}_{\ 0} \big](0,\tau) \theta\big[{}^{\ 0}_{1/6} \big](0,\tau)\theta\big[{}^{\ 0}_{1/3} \big](0,\tau)
 \theta^{-1}\big[{}^{1/2}_{1/3} \big](0,\tau).
\end{multline*}
One more identity for this theta derivative is proposed in
\cite[Theorem 3.2]{M2}:
\begin{multline*}
\theta' \big[{}^{1/2}_{1/6} \big](0,\tau) \\
= -\frac{\pi}{6} \theta\big[{}^{0}_{0} \big](0,\tau)\theta\big[{}^{\ 0}_{1/2} \big](0,\tau)
 \Big(\theta^4\big[{}^{1/2}_{1/6} \big](0,\tau) \theta^{-3}\big[{}^{1/2}_{1/3} \big](0,\tau) 
 - 3 \theta\big[{}^{1/2}_{1/3} \big](0,\tau)\Big).
\end{multline*}
\end{rmk}

\begin{example}\label{E:p65}
Combining $\mathcal{O}_{1/6}$  with $\mathcal{O}_{1/5}$, obtain the following sets of characteristics:
\begin{gather*}
\big\{\big[{}^{1/5}_{1/6}\big],\, \big[{}^{3/5}_{1/2}\big],\, \big[{}^{4/5}_{1/2}\big],\, \big[{}^{2/5}_{1/2}\big] \big\},\quad
\big\{\big[{}^{3/5}_{1/6}\big],\, \big[{}^{4/5}_{1/2}\big],\, \big[{}^{2/5}_{1/2}\big],\, \big[{}^{1/5}_{1/2}\big] \big\},\\
\big\{\big[{}^{4/5}_{1/6}\big],\, \big[{}^{2/5}_{1/2}\big],\, \big[{}^{1/5}_{1/2}\big],\, \big[{}^{3/5}_{1/2}\big] \big\},\quad
\big\{\big[{}^{2/5}_{1/6}\big],\, \big[{}^{1/5}_{1/2}\big],\, \big[{}^{3/5}_{1/2}\big],\, \big[{}^{4/5}_{1/2}\big] \big\},\\
\big\{\big[{}^{1/6}_{1/5}\big],\, \big[{}^{1/2}_{3/5}\big],\, \big[{}^{1/2}_{4/5}\big],\, \big[{}^{1/2}_{2/5}\big] \big\}, \quad
\big\{\big[{}^{1/6}_{3/5}\big],\, \big[{}^{1/2}_{4/5}\big],\, \big[{}^{1/2}_{2/5}\big],\, \big[{}^{1/2}_{1/5}\big] \big\}, \\
\big\{\big[{}^{1/6}_{4/5}\big],\, \big[{}^{1/2}_{2/5}\big],\, \big[{}^{1/2}_{1/5}\big],\, \big[{}^{1/2}_{3/5}\big] \big\}, \quad
\big\{\big[{}^{1/6}_{2/5}\big],\, \big[{}^{1/2}_{1/5}\big],\, \big[{}^{1/2}_{3/5}\big],\, \big[{}^{1/2}_{4/5}\big] \big\}.
\end{gather*}
In this case one should start with the last two characteristics, which form a period set under the action 
of multiplication by $3$, and so produce an independent system of equations. Then equation \eqref{MainExpr2}
with the first two characteristic is solved for theta derivative with the first characteristic from a set.

Considering $\big\{\big[{}^{1/5}_{1/6}\big]$, $\big[{}^{3/5}_{1/2}\big]$, $\big[{}^{4/5}_{1/2}\big]$, $\big[{}^{2/5}_{1/2}\big] \big\}$,
start with characteristics $\big\{\big[{}^{4/5}_{1/2}\big]$, $\big[{}^{2/5}_{1/2}\big] \big\}$,
which satisfy the system
\begin{align*}
&3 \theta \big[{}^{2/5}_{1/2}\big] \theta'\big[{}^{4/5}_{1/2}\big]
- \theta\big[{}^{4/5}_{1/2}\big] \theta' \big[{}^{2/5}_{1/2}\big] = 
\theta^3 \big[{}^{-11/10}_{\ -1/2}\big] \theta' \big[{}^{1/2}_{1/2}\big] \theta^{-2} \big[{}^{4/5}_{1/2}\big],\\
&3 \theta \big[{}^{1/5}_{1/2}\big] \theta'\big[{}^{2/5}_{1/2}\big]
- \theta\big[{}^{2/5}_{1/2}\big] \theta' \big[{}^{1/5}_{1/2}\big] = \rme^{2 \pi \imath/5}
\theta^3 \big[{}^{-43/10}_{\ -5/2}\big] \theta' \big[{}^{1/2}_{1/2}\big] \theta^{-2} \big[{}^{2/5}_{1/2}\big].
\end{align*}
Note that $\theta\big[{}^{4/5}_{1/2}\big] = \rme^{-2\pi \imath/5} \theta\big[{}^{1/5}_{1/2}\big]$ and 
$\theta'\big[{}^{4/5}_{1/2}\big] = \rme^{3\pi \imath/5} \theta' \big[{}^{1/5}_{1/2}\big]$.
Having applied this to the first equation, solve the system for unknown theta derivatives:
\begin{align*}
&\theta'\big[{}^{1/5}_{1/2}\big] = \frac{\pi}{10} \theta\big[{}^{0}_{0}\big] \theta\big[{}^{\ 0}_{1/2}\big] \theta\big[{}^{1/2}_{\ 0}\big]
\Big(3 \rme^{6\pi \imath/5} \theta^{3} \big[{}^{1/10}_{\; 1/2}\big] 
\theta^{-2} \big[{}^{1/5}_{1/2}\big]\theta^{-1} \big[{}^{2/5}_{1/2}\big]
+ \theta^{3} \big[{}^{3/10}_{\; 1/2}\big] \theta^{-3} \big[{}^{2/5}_{1/2}\big]\Big), \\
&\theta'\big[{}^{2/5}_{1/2}\big] = \frac{\pi}{10} \theta\big[{}^{0}_{0}\big] 
\theta\big[{}^{\ 0}_{1/2}\big] \theta\big[{}^{1/2}_{\ 0}\big]
\Big(\rme^{6\pi \imath/5} \theta^{3} \big[{}^{1/10}_{\; 1/2}\big] 
\theta^{-3} \big[{}^{1/5}_{1/2}\big]
- 3 \theta^{3} \big[{}^{3/10}_{\; 1/2}\big] \theta^{-2} \big[{}^{2/5}_{1/2}\big]\theta^{-1} \big[{}^{1/5}_{1/2}\big]\Big).
\end{align*}
At the same time, an expression for $\theta'\big[{}^{3/5}_{1/2}\big]$ is obtained, since 
$\theta'\big[{}^{3/5}_{1/2}\big] = \rme^{\pi \imath/5} \theta'\big[{}^{2/5}_{1/2}\big]$. Therefore,
$\theta'\big[{}^{1/5}_{1/6}\big]$ can be found directly from \eqref{MainExpr2} written for a pair of 
characteristics $\big[{}^{1/5}_{1/6}\big]$ and $\big[{}^{3/5}_{1/2}\big]$, that is
\begin{multline*}
\theta'\big[{}^{1/5}_{1/6}\big] = - \frac{\pi}{30} \theta\big[{}^{0}_{0}\big] \theta\big[{}^{\ 0}_{1/2}\big] \theta\big[{}^{1/2}_{\ 0}\big]
\Big(10 \theta^{3} \big[{}^{1/10}_{\; 1/6}\big] \theta^{-2} \big[{}^{1/5}_{1/6}\big]\theta^{-1} \big[{}^{2/5}_{1/2}\big] \\
+ \rme^{6\pi \imath/5} \theta^{3} \big[{}^{1/10}_{\; 1/2}\big] \theta \big[{}^{1/5}_{1/6}\big]
\theta^{-3} \big[{}^{1/5}_{1/2}\big] \theta^{-1} \big[{}^{2/5}_{1/2}\big] 
- 3 \theta^{3} \big[{}^{3/10}_{\; 1/2}\big]  \theta \big[{}^{1/5}_{1/6}\big]
\theta^{-3} \big[{}^{2/5}_{1/2}\big] \theta^{-1} \big[{}^{1/5}_{1/2}\big] \Big).
\end{multline*}
Similarly, find
\begin{align*}
&\theta'\big[{}^{1/2}_{1/5}\big] = -\frac{\pi}{10} \theta\big[{}^{0}_{0}\big] \theta\big[{}^{\ 0}_{1/2}\big] \theta\big[{}^{1/2}_{\ 0}\big]
\Big(3 \theta^{3} \big[{}^{\; 1/2}_{1/10}\big] 
\theta^{-2} \big[{}^{1/2}_{1/5}\big]\theta^{-1} \big[{}^{1/2}_{2/5}\big]
- \theta^{3} \big[{}^{\; 1/2}_{3/10}\big] \theta^{-3} \big[{}^{1/2}_{2/5}\big]\Big), \\
&\theta'\big[{}^{1/2}_{2/5}\big] = - \frac{\pi}{10} \theta\big[{}^{0}_{0}\big] 
\theta\big[{}^{\ 0}_{1/2}\big] \theta\big[{}^{1/2}_{\ 0}\big]
\Big( \theta^{3} \big[{}^{\; 1/2}_{1/10}\big] \theta^{-3} \big[{}^{1/2}_{1/5}\big]
+ 3 \theta^{3} \big[{}^{\; 1/2}_{3/10}\big] \theta^{-2} \big[{}^{1/2}_{2/5}\big]\theta^{-1} \big[{}^{1/2}_{1/5}\big]\Big),\\
&\theta'\big[{}^{1/6}_{1/5}\big] = - \frac{\pi}{30} \theta\big[{}^{0}_{0}\big] \theta\big[{}^{\ 0}_{1/2}\big] \theta\big[{}^{1/2}_{\ 0}\big]
\Big(10 \theta^{3} \big[{}^{\; 1/6}_{1/10}\big] \theta^{-2} \big[{}^{1/6}_{1/5}\big]\theta^{-1} \big[{}^{1/2}_{2/5}\big] \\
&\quad\quad - \theta^{3} \big[{}^{\; 1/2}_{1/10}\big] \theta \big[{}^{1/6}_{1/5}\big]
\theta^{-3} \big[{}^{1/2}_{1/5}\big] \theta^{-1} \big[{}^{1/2}_{2/5}\big] 
- 3 \theta^{3} \big[{}^{\; 1/2}_{3/10}\big]  \theta \big[{}^{1/6}_{1/5}\big]
\theta^{-3} \big[{}^{1/2}_{2/5}\big] \theta^{-1} \big[{}^{1/2}_{1/5}\big] \Big).
\end{align*}
In \cite[Theorem 6.2]{M2} different expressions for $\theta'\big[{}^{1/2}_{1/5}\big]$ and $\theta'\big[{}^{1/2}_{2/5}\big]$
are proposed, obtained with the help of a different elliptic function.
\end{example}

\begin{example}
Consider a more complicated example, not mentioned in the literature. Let $p=13$,
and there are four orbits on which the multiplication by $3$ acts periodically, namely:
$\mathcal{O}_{1/13} = \{1,\,3,\,9\}$, $\mathcal{O}_{2/13} = \{2,\,6,\, 5\}$, 
$\mathcal{O}_{4/13} = \{4,\,12,\,10\}$, and $\mathcal{O}_{7/13} = \{7,\,8,\,11\}$.
Now we explain how to find, for instance, $\theta' \big[{}^{\;1/13}_{12/13}\big]$. 
This characteristic produces the set
$$\mathcal{S}\big(\big[{}^{\;1/13}_{12/13}\big] \big) =
\big\{\big[{}^{\;1/13}_{12/13}\big],\, \big[{}^{\;3/13}_{10/13}\big],\, \big[{}^{9/13}_{4/13}\big]\big\}.$$
The required expression is given by \eqref{ThetaDerSol}:
\begin{multline*}
\theta'\big[{}^{\;1/13}_{12/13}\big] = \Big(26 
\theta\big[{}^{\;1/13}_{12/13}\big] \theta\big[{}^{\;3/13}_{10/13}\big] \theta\big[{}^{9/13}_{4/13}\big] \Big)^{-1}
\theta'\big[{}^{1/2}_{1/2}\big] \theta\big[{}^{\;1/13}_{12/13}\big] \times \\ \times \Big(
9 \rme^{-6\pi \imath/13} \theta^3 \big[{}^{\ \ 9/26}_{-35/26}\big] \theta\big[{}^{9/13}_{4/13}\big] \theta^{-2}\big[{}^{\;1/13}_{12/13}\big]
+ 3\rme^{-6\pi \imath/13} \theta^3 \big[{}^{\ \ \ 1/26}_{-131/26}\big] \theta\big[{}^{\; 1/13}_{12/13}\big] 
\theta^{-2}\big[{}^{\;3/13}_{10/13}\big] \\
+ \rme^{-10\pi \imath/13} \theta^3 \big[{}^{\ -23/26}_{-419/26}\big] \theta\big[{}^{\; 3/13}_{10/13}\big] 
\theta^{-2} \big[{}^{9/13}_{4/13}\big]\Big) \\
= -\frac{\pi}{26} \theta\big[{}^{0}_{0}\big] \theta\big[{}^{\ 0}_{1/2}\big] \theta\big[{}^{1/2}_{\ 0}\big] \Big(
9 \rme^{-8 \pi \imath/13} \theta^3 \big[{}^{\ 9/26}_{17/26}\big] \theta^{-1}\big[{}^{\;3/13}_{10/13}\big]  
\theta^{-2}\big[{}^{\;1/13}_{12/13}\big] \\
+ 3\rme^{2 \pi \imath/13} \theta^3 \big[{}^{\ 1/26}_{25/26}\big] \theta\big[{}^{\; 1/13}_{12/13}\big] 
\theta^{-1} \big[{}^{9/13}_{4/13}\big] \theta^{-3}\big[{}^{\;3/13}_{10/13}\big] 
+ \rme^{-7 \pi \imath/13} \theta^3 \big[{}^{\ 3/26}_{23/26}\big] 
\theta^{-3} \big[{}^{9/13}_{4/13}\big]\Big).
\end{multline*}
Similarly,
\begin{align*}
&\theta'\big[{}^{\;3/13}_{10/13}\big] = 
-\frac{\pi}{26} \theta\big[{}^{0}_{0}\big] \theta\big[{}^{\ 0}_{1/2}\big] \theta\big[{}^{1/2}_{\ 0}\big] \Big(
9 \rme^{2 \pi \imath/13} \theta^3 \big[{}^{\ 1/26}_{25/26}\big] \theta^{-1}\big[{}^{9/13}_{4/13}\big]  
\theta^{-2}\big[{}^{\;3/13}_{10/13}\big] \\
&\quad + 3\rme^{-7 \pi \imath/13} \theta^3 \big[{}^{\ 3/26}_{23/26}\big] \theta\big[{}^{\;3/13}_{10/13}\big]
\theta^{-1} \big[{}^{\;1/13}_{12/13}\big] \theta^{-3} \big[{}^{9/13}_{4/13}\big]
+ \rme^{-8 \pi \imath/13} \theta^3 \big[{}^{\ 9/26}_{17/26}\big] 
\theta^{-3} \big[{}^{\; 1/13}_{12/13}\big]  \Big), \\
&\theta'\big[{}^{9/13}_{4/13}\big] = 
-\frac{\pi}{26} \theta\big[{}^{0}_{0}\big] \theta\big[{}^{\ 0}_{1/2}\big] \theta\big[{}^{1/2}_{\ 0}\big] 
\Big(
9 \rme^{-7 \pi \imath/13} \theta^3 \big[{}^{\ 3/26}_{23/26}\big] 
 \theta^{-1}\big[{}^{\;1/13}_{12/13}\big]  
\theta^{-2}\big[{}^{9/13}_{4/13}\big] \\
&\quad + 3 \rme^{-8 \pi \imath/13} \theta^3 \big[{}^{\ 9/26}_{17/26}\big]  \theta\big[{}^{9/13}_{4/13}\big]
\theta^{-1} \big[{}^{\;3/13}_{10/13}\big] \theta^{-3} \big[{}^{\;1/13}_{12/13}\big]
+ \rme^{2 \pi \imath/13} \theta^3 \big[{}^{\ 1/26}_{25/26}\big]
\theta^{-3} \big[{}^{\; 3/13}_{10/13}\big]  \Big).
\end{align*}
\end{example}

\begin{rmk}
Looking at examples one can see that all terms in expressions for theta derivatives
are homogeneous in theta constants of degree $3$.
\end{rmk}

\end{document}